\documentclass{amsart}
\usepackage{amsfonts}
\usepackage{amsmath}
\usepackage{amssymb}
\usepackage{amscd}
\usepackage{MnSymbol}

\usepackage[all,cmtip]{xy}

\newtheorem{thm}{Theorem}[section]
\newtheorem{lm}[thm]{Lemma}
\newtheorem{cor}[thm]{Corollary}
\newtheorem{prop}[thm]{Proposition}
\newtheorem{exm}[thm]{Example}

\theoremstyle{definition}

\newcommand{\Hom}{\text{\rm Hom}}

\newcommand{\Cc}{\mathcal{C}}
\newcommand{\Nc}{\mathcal{N}}
\newcommand{\Fc}{\mathcal{F}}
\newcommand{\Tc}{\mathcal{T}}
\newcommand{\Sc}{\mathcal{S}}
\newcommand{\M}{\mathcal{M}}
\newcommand{\Z}{\mathbb{Z}}
\newcommand{\Q}{\mathbb{Q}}
\newcommand{\Pb}{\mathbb{P}}
\DeclareMathOperator{\Add}{{\rm Add}}

\DeclareMathOperator{\Ker}{\rm ker}
\DeclareMathOperator{\End}{\rm End}

\DeclareMathOperator{\im}{\rm im}

\subjclass[2000]{20K40 (20K20, 20K21)}
\keywords{self-small abelian group, slender group}
\thanks{This work is part of the project SVV-2020-260589}

\begin{document}
\title{Self-small products of abelian groups}
\author{Josef Dvo\v r\'ak}
\address{CTU in Prague, FEE, Department of mathematics, Technick\'a 2, 166 27 Prague 6 \&
MFF UK, Department of Algebra,  Sokolovsk\' a 83, 186 75 Praha 8, Czech Republic} 
\email{pepa.dvorak@post.cz}

\author{Jan \v Zemli\v cka}
\address{Department of Algebra, Charles University,
Faculty of Mathematics and Physics Sokolovsk\' a 83, 186 75 Praha 8, Czech Republic} 
\email{zemlicka@karlin.mff.cuni.cz}

\begin{abstract}
For abelian groups $A, B$, A is called $B$-small if the covariant functor $\Hom(A,-)$ commutes with all direct sums $B^{(\kappa)}$ and $A$ is self-small provided it is $A$-small. The paper characterizes self-small products applying developed closure properties of the classes of relatively small groups. As a consequence, self-small products of finitely generated abelian groups are described. 
\end{abstract}
\date{\today}
\maketitle


Research of modules whose covariant functor $\Hom(M,-)$ commutes with all direct sums, which is a condition providing a categorial generalization the notion of finitely generated module, started in 60's by the work of Hyman Bass \cite[p.54]{Bass68} and  Rudolf Rentschler  \cite{Ren69}. Such modules have appeared as a useful tool in diverse contexts and under various terms (small, $\Sigma$-compact, U-compact, dually slender) in ring theory, module theory and in the study of abelian groups. David M. Arnold and Charles E. Murley published their influential paper \cite{A-M}, which is dedicated to a particular case of the studied condition by narrowing it to commuting with direct sums of the tested module itself, in  1974.  Groups and modules satisfying this restricted condition are usually called self-small in literature. Many interesting results concerning self-small modules over unital rings in general have appeared later \cite{AB,BZ,CoMe,GNM,Mo19},  self-small abelian groups proving to be a particularly successful tool \cite{ABS, ABW,  Pur, Br03, B20, Dualities}.

The aim of this paper is to deepen the present knowledge about structure of self-small groups and about possibilities of testing abelian groups for self-smallness by adopting some ideas of the papers \cite{ABS,DZ21,KZ19} and extending several results of \cite{Dv15,Z08}. Namely, we deal with the notion of a relatively small abelian group (defined in \cite{ABS,GNM}, cf. also  relatively compact objects in \cite{KZ19}) which serves as a tool for characterization of those products of groups that are self-small.

Throughout the paper \emph{module} means a right module over an associative ring with unit and an abelian group is a module over the ring of integers. Note that we will use the term \emph{group} instead of abelian group frequently, as non-abelian groups are not considered here.
If $A$ and $B$ are two abelian groups, then $\Hom(A,B)$ denotes the abelian group of homomorphisms $A\to B$. A \emph{family of groups} means a discrete diagram in the category of abelian groups, so a family may contain more that one copy of a group. 
The set of all prime numbers is denoted by $\mathbb{P}$ and we identify cardinals with least ordinals of given cardinality. 

For non-explained terminology we refer to \cite{Fuchs1,FuchsII}.

\section{Relatively small groups}

Let $A$, $B$ be abelian groups and $\Nc$ a family of abelian groups. It is well-known (and easy to see) that the functor 
$\Hom(A,-)$ induces an injective homomorphism of abelian groups
\begin{equation*}
\Psi_\Nc: \bigoplus_{N\in\Nc} \Hom(A,N) \to \Hom(A,\bigoplus\Nc)
\end{equation*}
by the rule
$\Psi_\Nc((f_N)_N)=\sum_N f_N$ (cf. e.g. \cite[Lemma 1.3]{KZ19}).
Suppose, then, that $\Cc$ is a class of groups and $B$ is an abelian group.
We say that $A$ is \emph{$\Cc$-small} if $\Psi_\Nc$ is an isomorphism for each subfamily $\Nc$ of class $\Cc$ and 
 $A$ is said to be {\it $B$-small} provided it is a $\{B\}$-small group (cf. \cite{ABS,DZ21,GNM,KZ19}). It is clear that  $A$-small abelian groups $A$  are exactly  \emph{self-small} ones as defined in \cite{A-M}.

\begin{exm}\rm (1) Every finitely generated abelian group is small, so $B$-small for every group $B$. In, particular each finite group is self-small.

(2) Let $A$ and $B$ be two abelian groups such that $\Hom(A,B)=0$. Then it is easy to see that $A$ is $B$-small.

In particular, if $p,q\in\mathbb P$ are different primes, $A_p$ is an abelian $p$-group and $A_q$ is an abelian $q$-group, then $A_p$ is $A_q$-small and $\Z$-small.
\end{exm}

\begin{exm} \rm It is clear, $\Q$ and $\Q/\Z$ are  $\Q$-small groups but
neither $\Q$ nor  $\Q/\Z$ is $\Q/\Z$-small. Furthermore, $\Q$-small groups are precisely groups of finite torsion-free rank by \cite[Corollary 4.3.]{ABS}.
\end{exm}

We start with an elementary observation which translates the definition of a relative small group to an easily tested condition  (cf. \cite[Section 1]{Ren69}, \cite[Lemma 1.4(2)]{KZ19} and \cite[Theorem 1.6(2)]{DZ21}):

\begin{lm}\label{B-small} Let $A$ and $B$ be abelian groups and $\Cc$ a class of abelian groups. Then $A$ is $\Cc$-small if and only if for each  family $\Nc$ of groups contained in the class $\Cc$ and every  $f\in\Hom(A,\bigoplus\Nc)$ there exists a finite family $\Fc\subseteq \Nc$ such that $f(A)\subseteq \bigoplus\Fc$. 
In particular, $A$ is $B$-small if and only if for every index set $I$ and every  $f\in\Hom(A,B^{(I)})$ there exists a finite subset $F\subseteq I$ such that $f(A)\subseteq B^{(F)})$.
\end{lm}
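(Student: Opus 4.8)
The plan is to unwind the definition: $A$ being $\Cc$-small says precisely that the canonically injective map $\Psi_\Nc$ is \emph{surjective} for every subfamily $\Nc\subseteq\Cc$, so it suffices to show that surjectivity of $\Psi_\Nc$ is equivalent to the stated finiteness condition on images of homomorphisms $A\to\bigoplus\Nc$. Write $\iota_N\colon N\to\bigoplus\Nc$ for the canonical injections, so that by definition $\Psi_\Nc((f_N)_N)=\sum_N\iota_N f_N$; this is a finite sum because an element $(f_N)_N$ of the direct sum $\bigoplus_{N\in\Nc}\Hom(A,N)$ has finite support.

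For the forward implication, given $f\in\Hom(A,\bigoplus\Nc)$ and $\Psi_\Nc$ surjective, I would pick $(f_N)_N$ with $\Psi_\Nc((f_N)_N)=f$ and set $\Fc=\{N\in\Nc:f_N\neq 0\}$; this is a finite subfamily of $\Nc$, and for every $a\in A$ we have $f(a)=\sum_{N\in\Fc}\iota_N(f_N(a))\in\bigoplus\Fc$, whence $f(A)\subseteq\bigoplus\Fc$. For the converse, given $f$ choose a finite $\Fc\subseteq\Nc$ with $f(A)\subseteq\bigoplus\Fc$. Since $\Fc$ is finite, $\bigoplus\Fc=\prod\Fc$ comes equipped with canonical projections $\pi_N$, so putting $f_N=\pi_N f$ for $N\in\Fc$ and $f_N=0$ otherwise defines an element $(f_N)_N\in\bigoplus_{N\in\Nc}\Hom(A,N)$, and $\Psi_\Nc((f_N)_N)=\bigl(\sum_{N\in\Fc}\iota_N\pi_N\bigr)f=f$; thus $\Psi_\Nc$ is onto, and being injective it is an isomorphism. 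The displayed ``in particular'' statement is then just the special case $\Nc=(B_i)_{i\in I}$ with each $B_i=B$, for which $\bigoplus\Nc=B^{(I)}$ and finite subfamilies $\Fc$ correspond to finite subsets $F\subseteq I$.

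There is no genuine obstacle in this argument; the only points requiring a little care are the set-versus-family bookkeeping — the family $\Nc$ may contain several copies of the same group, so one must index injections, projections, and the support set $\Fc$ by $\Nc$ itself rather than by isomorphism types — and the observation that it is exactly the \emph{finiteness} of $\Fc$ that makes the projections $\pi_N$, and hence the factorization of $f$ through $\bigoplus\Fc$, available.
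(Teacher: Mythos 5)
Your argument is correct and is essentially the paper's own proof: both directions proceed by unwinding the surjectivity of $\Psi_\Nc$, taking the (finite) support of a preimage for the forward implication and composing $f$ with the projections onto the finitely many relevant components for the converse. The extra care you take with the family-versus-set bookkeeping and with deriving the ``in particular'' clause is fine but does not change the route.
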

\begin{proof} The argument of the proof is well known; 
if $\Psi_\Nc$ is onto and $f\in\Hom(A,\bigoplus\Nc)$, then there exist finitely many $f_i\in \Hom(A,N_i)$, $i=1,\dots,n$ such that $\Psi_\Nc(\oplus_{i}f_i)=f$, hence $f(A)\subseteq \bigoplus_{i=1}^nN_i$. On the other hand, if $f(A)\subseteq \bigoplus_{i=1}^nN_i\subseteq \bigoplus\Nc$, then $\Psi_\Nc(\oplus_{i}\pi_{N_i}f)=f$, where $\pi_{N_i}$ denotes the projection onto $i$-th component.
\end{proof}

The observation that the concept of relatively small groups is general enough if we consider  relative smallness over a set of groups (cf. general \cite[Lemma 2.1]{DZ21}) presents a first application of the previous lemma. To that end, for a class of groups define 
$$\Add(\Cc) = \{A \, | \, A \text{ is a direct sumand of } \bigoplus_{\alpha<\kappa} C_{\alpha} \text{ for some cardinal } \kappa \text{ and } C_{\alpha} \in \Cc\}$$

and by $\Add(A)$ denote $\Add(\{A\})$.

\begin{lm}\label{Cc-small} Let $A$ be an abelian group and $\Cc$ be a set of abelian groups.
Then the following conditions are equivalent:
\begin{enumerate}
\item $A$ is $\bigoplus\Cc$-small,
\item $A$ is $\Cc$-small,
\item $A$ is $\Add(\bigoplus\Cc)$-small.
\end{enumerate}
\end{lm}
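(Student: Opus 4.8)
The plan is to translate every instance of relative smallness into the finite-support criterion of Lemma~\ref{B-small} and to prove the cycle $(3)\Rightarrow(1)\Rightarrow(2)\Rightarrow(3)$. Throughout I will use the obvious monotonicity: if $\Cc'\subseteq\Cc''$ are classes of groups, then every $\Cc''$-small group is $\Cc'$-small, since a family contained in $\Cc'$ is in particular a family contained in $\Cc''$. As $\bigoplus\Cc$ is a direct summand of itself, it lies in $\Add(\bigoplus\Cc)$, hence $\{\bigoplus\Cc\}\subseteq\Add(\bigoplus\Cc)$ and the implication $(3)\Rightarrow(1)$ is immediate.

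For $(1)\Rightarrow(2)$, let $\Nc=(N_i)_{i\in I}$ be a family contained in $\Cc$ and $f\in\Hom(A,\bigoplus\Nc)$. Since every $N_i\in\Cc$ is one of the direct summands of $\bigoplus\Cc$, the evident inclusion realizes $\bigoplus\Nc=\bigoplus_{i\in I}N_i$ as a direct summand of $(\bigoplus\Cc)^{(I)}=\bigoplus_{i\in I}(\bigoplus\Cc)$, sending the $i$-th summand $N_i$ into the $i$-th copy of $\bigoplus\Cc$. Composing $f$ with this inclusion and applying $(1)$ together with Lemma~\ref{B-small}, we obtain a finite $F\subseteq I$ with $f(A)\subseteq(\bigoplus\Cc)^{(F)}$; since $\bigoplus\Nc\cap(\bigoplus\Cc)^{(F)}=\bigoplus_{i\in F}N_i$, we get $f(A)\subseteq\bigoplus_{i\in F}N_i$, and Lemma~\ref{B-small} gives that $A$ is $\Cc$-small.

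The implication $(2)\Rightarrow(3)$ is the heart of the proof and follows the same pattern one level higher. Let $\M=(M_j)_{j\in J}$ be a family contained in $\Add(\bigoplus\Cc)$. By definition each $M_j$ is a direct summand of $(\bigoplus\Cc)^{(\kappa_j)}$ for some cardinal $\kappa_j$, and $(\bigoplus\Cc)^{(\kappa_j)}=\bigoplus\Cc_j$, where $\Cc_j$ is the family consisting of $\kappa_j$ copies of each member of $\Cc$ and is therefore itself a family contained in $\Cc$. Letting $\Nc$ be the family contained in $\Cc$ obtained by concatenating the $\Cc_j$ over $j\in J$, we get that $\bigoplus\M$ is a direct summand of $\bigoplus\Nc$, with the $j$-th summand $M_j$ mapped into the sub-sum $\bigoplus\Cc_j$. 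Given $f\in\Hom(A,\bigoplus\M)$, composing with this inclusion and applying $(2)$ and Lemma~\ref{B-small} produces a finite subfamily $\Fc\subseteq\Nc$ with $f(A)\subseteq\bigoplus\Fc$. As $\Fc$ is finite it meets only finitely many of the blocks $\Cc_j$, say those with $j$ in a finite $J_0\subseteq J$, so $f(A)\subseteq\bigoplus\Fc\subseteq\bigoplus_{j\in J_0}(\bigoplus\Cc_j)$, and intersecting with $\bigoplus\M$ yields $f(A)\subseteq\bigoplus_{j\in J_0}M_j$. By Lemma~\ref{B-small}, $A$ is $\Add(\bigoplus\Cc)$-small.

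The only delicate point, and the one I would write out in full, is the bookkeeping behind the identities $\bigoplus\Nc\cap(\bigoplus\Cc)^{(F)}=\bigoplus_{i\in F}N_i$ and $\bigoplus\M\cap\bigl(\bigoplus_{j\in J_0}(\bigoplus\Cc_j)\bigr)=\bigoplus_{j\in J_0}M_j$: after fixing the identification of the index sets one checks that the chosen summand inclusions respect the relevant block decompositions, which is exactly what transfers finiteness of support ``upstairs'' to a finite subfamily ``downstairs''. The verification is elementary, but it is also the (implicit) place where the hypothesis that $\Cc$ is a \emph{set} is used, since otherwise neither $\bigoplus\Cc$ nor the families $\Nc$ above need be defined.
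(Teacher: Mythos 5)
Your proof is correct and uses essentially the same method as the paper: the finite-support criterion of Lemma~\ref{B-small} combined with realizing the relevant direct sum as a block-decomposed summand of a larger direct sum to which the hypothesis applies. The only cosmetic difference is that you run the cycle $(3)\Rightarrow(1)\Rightarrow(2)\Rightarrow(3)$ while the paper runs $(1)\Rightarrow(3)\Rightarrow(2)\Rightarrow(1)$, reserving the trivial monotonicity step for a different arrow.
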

\begin{proof} (1)$\Rightarrow$(3) Put $B=\bigoplus\Cc$, let  $\Nc$ be a family of groups contained in  $\Add(B)$, and  $f\in\Hom(A,\bigoplus\Nc)$. Then for each $N\in\Nc$ there exists a cardinal $\kappa_N$ for which $N \subseteq B^{(\kappa_N)}$ ($N$ is also a direct summand of $B^{(\kappa_N)}$), and so $f(A)\subseteq \bigoplus\Nc\subseteq \bigoplus_{N\in\Nc}B^{(\kappa_N)}$. Since $A$ is $B$-compact, there exists finite family $\Fc\subseteq \Nc$ such that $f(A)\subseteq \bigoplus_{N\in\Fc}B^{(\kappa_N)}$ which implies that  $f(A)\subseteq \bigoplus\Fc$.

(3)$\Rightarrow$(2) It is obvious since $\Cc\subseteq\Add(\bigoplus\Cc)$. 

(2)$\Rightarrow$(1) As any group $B\in\Cc$ is a direct summand of $\bigoplus\Cc$, the same argument as in the implication (1)$\Rightarrow$(3) proves the assertion.
\end{proof}

Since $\Add(B)=\Add(B^{(\kappa)})$ for an arbitrary group $B$ and a nonzero cardinal $\kappa$, we obtain the following useful criterion:

\begin{cor}\label{power}   Let $A$ and $B$ be abelian groups and $\kappa$ a nonzero cardinal. Then  $A$ is $B$-small if and only if $A$ is $B^{(\kappa)}$-small.
\end{cor}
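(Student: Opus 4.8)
The plan is to deduce the statement directly from Lemma~\ref{Cc-small}, applied to two one-element classes of groups. First I would note that for the singleton $\Cc=\{B\}$ we have $\bigoplus\Cc=B$, so the equivalence (2)$\Leftrightarrow$(3) of Lemma~\ref{Cc-small} tells us that $A$ is $B$-small if and only if $A$ is $\Add(B)$-small. Applying the same lemma to $\Cc=\{B^{(\kappa)}\}$, for which $\bigoplus\Cc=B^{(\kappa)}$, gives that $A$ is $B^{(\kappa)}$-small if and only if $A$ is $\Add(B^{(\kappa)})$-small.

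It then remains to check the purely formal identity $\Add(B)=\Add(B^{(\kappa)})$ for every nonzero cardinal $\kappa$, which is exactly the fact quoted just before the corollary. The inclusion $\Add(B^{(\kappa)})\subseteq\Add(B)$ holds because $B^{(\kappa)}$ is already a direct sum of copies of $B$, so a direct summand of a direct sum of copies of $B^{(\kappa)}$ is a direct summand of a direct sum of copies of $B$. For the reverse inclusion one uses $\kappa\neq 0$: then $B$ is a direct summand of $B^{(\kappa)}$ (split off one coordinate), hence any direct sum of copies of $B$, and any direct summand thereof, lies in $\Add(B^{(\kappa)})$. Chaining the two equivalences from the first paragraph through $\Add(B)=\Add(B^{(\kappa)})$ yields the corollary.

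I do not anticipate a real obstacle here: all the substance has been packaged into Lemma~\ref{Cc-small}, and what is left is the elementary closure property of $\Add$. The only point that genuinely requires care is the hypothesis $\kappa\neq 0$, which is precisely what makes $B$ a direct summand of $B^{(\kappa)}$; without it the right-to-left implication collapses, since $B^{(0)}=0$ is $C$-small for every group $C$ while a nonzero $B$ need not be self-small.
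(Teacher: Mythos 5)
Your proof is correct and follows exactly the paper's route: the corollary is obtained from Lemma~\ref{Cc-small} applied to the singletons $\{B\}$ and $\{B^{(\kappa)}\}$, chained through the identity $\Add(B)=\Add(B^{(\kappa)})$, which the paper states without proof in the sentence preceding the corollary and which you verify explicitly (correctly isolating $\kappa\neq 0$ as the point where $B$ splits off $B^{(\kappa)}$).
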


As a consequence, we can formulate a well-known closure property of the class of all self-small groups.

\begin{cor}\label{M^k} Let $\kappa$ be a cardinal and $A$ an abelian group.
Then $A^{(\kappa)}$ is self-small if and only if $A$ is self-small and $\kappa$ is finite. 
\end{cor}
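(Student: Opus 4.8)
The plan is to deduce both implications from Lemma~\ref{B-small}, which reformulates relative smallness as the condition that every homomorphism into a direct sum has image contained in a finite subsum; throughout I assume $A\neq 0$ and $\kappa\geq 1$, the degenerate cases being clear.

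\emph{Sufficiency.} Suppose $A$ is self-small and $\kappa=n$ is finite, and write $A^{(n)}=A^n$. Fix an index set $I$ and $f\in\Hom(A^n,(A^n)^{(I)})$, identifying $(A^n)^{(I)}$ with $A^{(n\times I)}$. Composing $f$ with each of the $n$ coordinate inclusions $\iota_j\colon A\to A^n$ and applying Lemma~\ref{B-small} to the $A$-small group $A$ produces finite sets $G_j\subseteq n\times I$ with $f(\iota_j(A))\subseteq A^{(G_j)}$. Then $f(A^n)=\sum_{j=1}^{n}f(\iota_j(A))\subseteq A^{(G)}$ with $G=\bigcup_{j}G_j$ finite, and if $F\subseteq I$ is the (finite) image of $G$ under the projection $n\times I\to I$, then $f(A^n)\subseteq A^{(n\times F)}=(A^n)^{(F)}$. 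By Lemma~\ref{B-small}, $A^n$ is $A^n$-small, i.e.\ self-small. (Alternatively, the same restriction argument shows that finite direct powers of any $\Cc$-small group are $\Cc$-small, and one concludes via Lemma~\ref{Cc-small} applied to $\Cc=\{A\}$ together with $\Add(A^n)=\Add(A)$.)

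\emph{Necessity.} Suppose $A^{(\kappa)}$ is self-small. To see that $A$ is self-small, observe that by Lemma~\ref{Cc-small} (with $\Cc=\{A^{(\kappa)}\}$) the group $A^{(\kappa)}$ is $\Add(A^{(\kappa)})$-small, hence $A$-small because $A\in\Add(A^{(\kappa)})$. Given $f\in\Hom(A,A^{(I)})$, precompose with a coordinate projection $\pi\colon A^{(\kappa)}\to A$; since $\pi$ is onto, $f(A)=(f\pi)(A^{(\kappa)})$, so Lemma~\ref{B-small} provides a finite $F\subseteq I$ with $f(A)\subseteq A^{(F)}$, whence $A$ is $A$-small. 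To see that $\kappa$ is finite, assume it is infinite and write $A^{(\kappa)}=\bigoplus_{\alpha<\kappa}A_\alpha$ with each $A_\alpha\cong A$. Define $f\colon A^{(\kappa)}\to(A^{(\kappa)})^{(\kappa)}$ by sending $A_\alpha$, through a coordinate inclusion $A\hookrightarrow A^{(\kappa)}$, into the $\alpha$-th summand of $(A^{(\kappa)})^{(\kappa)}$. Since $A\neq 0$, each $f(A_\alpha)$ is a nonzero subgroup of a distinct summand, so $f(A^{(\kappa)})$ is contained in no finite subsum $(A^{(\kappa)})^{(F)}$, contradicting Lemma~\ref{B-small}.

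The bookkeeping of restricting and extending homomorphisms along coordinate maps is routine once Lemma~\ref{B-small} is available; the only point carrying real content is the finiteness of $\kappa$, that is, producing the ``diagonal'' homomorphism whose image meets infinitely many summands --- which is exactly where the hypothesis $A\neq 0$ is used and where self-smallness of an infinite power fails.
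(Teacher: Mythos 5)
Your proof is correct and follows the route the paper intends: the corollary is stated there without proof as a consequence of Corollary~\ref{power} (equivalently Lemma~\ref{Cc-small}), and your verification via Lemma~\ref{B-small} --- restricting to the $n$ coordinate inclusions for sufficiency, precomposing with a coordinate projection to pass self-smallness down to $A$, and exhibiting the diagonal map $A^{(\kappa)}\to(A^{(\kappa)})^{(\kappa)}$ to force $\kappa$ finite --- is exactly the standard argument being invoked. Your caveat about $A\neq 0$ and $\kappa\geq 1$ is well taken, since in the degenerate cases the stated equivalence literally fails (e.g.\ $0^{(\omega)}$ is self-small); the paper tacitly assumes nontriviality.
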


Let us formulate a variant of the  assertion \cite[Theorem 4.1.]{ABS}, which generalize the classical criterion of self-small groups \cite[Proposition 1.1]{A-M} for the case of relatively small groups (cf. \cite[Lemma~3.3]{DZ21}). Recall that the family $(A_i\mid i<\omega)$ is said to be \emph{$\omega$-filtration} of a group $A$, if it is a chain of subgroups of $A$, i.e. $A_i\subseteq A_{i+1}$ for each $i<\omega$, with $A=\bigcup _{n<\omega}  A_n$.

\begin{prop}\label{notB-small} 
The following conditions are equivalent for abelian groups $A$ and $B$:
 \begin{enumerate}
\item $A$ is not $B$-small,
\item there exists a homomorphism $f\in \Hom(A,B^{(\omega)})$ such that $f(A)\nsubseteq B^{(n)}$ for all $n<\omega$,
\item there exists an $\omega$-filtration $(A_i\mid i<\omega)$ of $A$ such that for each $n<\omega$ there exists a nonzero $f_n\in \Hom(A,B)$ satisfying $f_n(A_n)=0$,
\item there exists an $\omega$-filtration $(A_i\mid i<\omega)$ of $A$ such that $\Hom(A/A_n,B)\ne 0$ for each $n<\omega$. 
\end{enumerate}
\end{prop}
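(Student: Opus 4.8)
The plan is to prove the cycle $(1)\Rightarrow(2)\Rightarrow(3)\Rightarrow(4)\Rightarrow(3)\Rightarrow(2)\Rightarrow(1)$, of which $(3)\Leftrightarrow(4)$ and $(2)\Rightarrow(1)$ are formalities; the substance sits in the countable reduction for $(1)\Rightarrow(2)$, the passage to coordinate projections for $(2)\Rightarrow(3)$, and the Arnold--Murley-type assembly of a single map into $B^{(\omega)}$ for $(3)\Rightarrow(2)$. Throughout write $B^{(n)}=\bigoplus_{i<n}B$ for the subgroup of $B^{(\omega)}$ spanned by the first $n$ coordinates and $\pi_k\colon B^{(\omega)}\to B$ for the $k$-th projection.

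For $(2)\Rightarrow(1)$, apply Lemma~\ref{B-small} with $I=\omega$: a finite $F\subseteq\omega$ lies in $\{0,\dots,n-1\}$ for $n=1+\max F$, so $B^{(F)}\subseteq B^{(n)}$, and a map $f$ as in (2) admits no finite witness, forcing $A$ not to be $B$-small. For the converse, Lemma~\ref{B-small} gives an index set $I$ and $f\in\Hom(A,B^{(I)})$ with $f(A)\nsubseteq B^{(F)}$ for every finite $F\subseteq I$; in particular $f\ne 0$, and one recursively builds finite sets $\emptyset=F_0\subsetneq F_1\subsetneq\cdots$ in $I$ by choosing, at stage $k$, some $a\in A$ with $f(a)\notin B^{(F_k)}$ and adjoining to $F_k$ a coordinate at which $f(a)$ is nonzero but which lies outside $F_k$. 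Setting $J=\bigcup_kF_k$ (countably infinite) and composing $f$ with the canonical projection $B^{(I)}\to B^{(J)}\cong B^{(\omega)}$ yields a homomorphism whose image has a nonzero coordinate beyond every finite initial segment of $J$, i.e.\ (2).

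For $(2)\Rightarrow(3)$, given $f$ as in (2) set $A_n=f^{-1}(B^{(n)})$; since $B^{(n)}\subseteq B^{(n+1)}$ and $\bigcup_nB^{(n)}=B^{(\omega)}$, this is an $\omega$-filtration of $A$. For each $n$, $f(A)\nsubseteq B^{(n)}$ produces $a\in A$ and $k\ge n$ with $\pi_kf(a)\ne 0$; then $f_n:=\pi_kf$ is nonzero and $f_n(A_n)\subseteq\pi_k(B^{(n)})=0$ because $k\ge n$, which is (3). Conversely, for $(3)\Rightarrow(2)$, given an $\omega$-filtration $(A_n)$ and nonzero $f_n\in\Hom(A,B)$ with $f_n(A_n)=0$, define $f\colon A\to B^{\omega}$ by $f(a)=(f_n(a))_{n<\omega}$. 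The crucial point is that $f$ actually maps into $B^{(\omega)}$: any $a$ lies in some $A_m$, hence in $A_n$ for all $n\ge m$ by the increasing-chain property, so $f_n(a)=0$ for $n\ge m$ and $f(a)$ has only finitely many nonzero coordinates. For each $n$, any $a$ with $f_n(a)\ne 0$ gives $f(a)$ with nonzero $n$-th coordinate, so $f(A)\nsubseteq B^{(n)}$; this is (2).

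Finally $(3)\Leftrightarrow(4)$: a nonzero $f_n\in\Hom(A,B)$ killing $A_n$ factors as a nonzero map through the canonical surjection $A\to A/A_n$, giving $\Hom(A/A_n,B)\ne 0$; conversely precomposing a nonzero element of $\Hom(A/A_n,B)$ with $A\to A/A_n$ recovers such an $f_n$. Combining the steps closes the equivalence. The main obstacle, modest as it is, is bookkeeping: guaranteeing that the recursion in $(1)\Rightarrow(2)$ adjoins a genuinely new coordinate at every stage (so that $J$ is infinite and the image escapes every $B^{(n)}$), and checking that the product map in $(3)\Rightarrow(2)$ lands in the direct sum rather than merely in the full product — both rest on using that an $\omega$-filtration is an increasing chain, not just a covering of $A$.
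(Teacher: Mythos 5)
Your proof is correct and follows essentially the same route as the paper: the reduction to Lemma~\ref{B-small}, the recursive extraction of countably many ``useful'' coordinates for $(1)\Rightarrow(2)$, the preimage filtration $A_n=f^{-1}(B^{(n)})$ for $(2)\Rightarrow(3)$, the factorization through $A/A_n$ for $(3)\Leftrightarrow(4)$, and the diagonal assembly $a\mapsto(f_n(a))_n$ landing in the direct sum for the return direction. The only cosmetic difference is that you prove each equivalence in both directions rather than closing a single cycle, and your filtration in $(2)\Rightarrow(3)$ is written as the (correct, increasing) chain of preimages of the initial-segment subgroups.
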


\begin{proof} The proof works using similar arguments as in \cite[Proposition 1.1]{A-M}.

(1)$\Rightarrow$(2) By Lemma~\ref{B-small} there exists a set $I$ and $g\in \Hom(A,B^{(I)})$ such that $g(A)\nsubseteq B^{(F)}$ for any finite $F\subset I$. Then we can construct by induction a sequence of finite sets $I_n\subset I$ such that $I_0=\emptyset$, $|I_n\setminus I_{n-1}|=1$ and $\Ker\pi_{I_{n-1}}g\supsetneq \Ker\pi_{I_n}g$ for all $n<\omega$ where $\pi_{I_n}\in\Hom(B^{(I)}, B^{(I_n)})$ denotes the natural projection. If we put $I_\omega=\bigcup_{i<\omega}I_i$, then $\pi_{I_\omega}g\in \Hom(A,B^{(I_\omega)})$ represents the desired homomorphism.

(2)$\Rightarrow$(3) Let $f\in \Hom(A,B^{(\omega)})$ satisfy the condition (2) and define $A_n=f^{-1}(B^{(n,\omega)})$ where $B^{(n,\omega)}=\{b\in B^{(\omega)}\mid \pi_i(b)=0\ \forall i\le n\}$ for natural projections $\pi_i: B^{(\omega)}\to B$ onto the $i$-th coordinate. Then $A=\bigcup _{i<\omega}  A_i$ and for each $i<\omega$ there exist $n_i>i$ such that $f_i=\pi_{n_i}f\ne 0$ with $f_i(A_i)=0$.

(3)$\Rightarrow$(4) It is enough to observe that any nonzero $f_n\in \Hom(A,B)$ satisfying $f_n(A_n)=0$ can be factorized through  the natural projection $\pi: A\to A/A_n$, i.e. there exists nonzero $\tilde{f}_n\in\Hom(A/A_n,B) $ for which $\tilde{f}_n \pi= f$.

(4)$\Rightarrow$(1) Let $f_i\in \Hom(A/A_i,B)$ denote a nonzero homomorphism
and define a homomorphism $f\in \Hom(A,B^{\omega})$ by the rule 
$\pi_i(f(a))=f_i(\pi_{A_i}(a))=f_i(a+A_i)$ for each $a\in A$ and $i<\omega$.
Then $f(A)\subseteq B^{(\omega)}$ since for each $a\in A$ there exists $n$ such that $a\in A_i$ for all $i\ge n$, hence $f\in \Hom(A,B^{(\omega)})$. On the other hand, $f(A)\nsubseteq B^{(n)}$ for any $n<\omega$ as $\pi_nf\ne 0$, $i<\omega$. Thus $A$ is not $B$-small  by Lemma~\ref{B-small}.
\end{proof}

The previous assertion applied on $A=B$ allows us to reformulate \cite[Proposition 9]{Dv15}.
 
\begin{cor} \label{char}
The following conditions are equivalent for an abelian groups $A$:
 \begin{enumerate}
\item $A$ is not self-small,
\item there exists  an $\omega$-filtration $(A_i\mid i<\omega)$ of $A$ such that $\Hom(A/A_n,A)\ne 0$ for each $n<\omega$,
\item there exists  an $\omega$-filtration $(A_i\mid i<\omega)$ of $A$ such that for each $n<\omega$ there exists a nonzero $\varphi_n\in \End(A)$ satisfying $\varphi_n(A_n)=0$. 
\end{enumerate}
\end{cor}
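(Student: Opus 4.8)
The plan is to deduce the corollary directly from Proposition~\ref{notB-small} by specializing the group $B$ to $A$. First I would recall, from the discussion following the definition of $\Cc$-smallness, that a group $A$ is self-small precisely when it is $A$-small; hence clause (1) of the corollary is literally clause (1) of Proposition~\ref{notB-small} with $B=A$.

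Next I would substitute $B=A$ in the remaining clauses of Proposition~\ref{notB-small}. Clause (4) there reads ``there exists an $\omega$-filtration $(A_i\mid i<\omega)$ of $A$ such that $\Hom(A/A_n,B)\ne 0$ for each $n<\omega$'', which upon the substitution becomes exactly clause (2) of the corollary. Clause (3) there reads ``there exists an $\omega$-filtration $(A_i\mid i<\omega)$ of $A$ such that for each $n<\omega$ there is a nonzero $f_n\in\Hom(A,B)$ with $f_n(A_n)=0$''; since $\Hom(A,A)=\End(A)$, this turns into clause (3) of the corollary verbatim. Clause (2) of Proposition~\ref{notB-small} (existence of $f\in\Hom(A,A^{(\omega)})$ with $f(A)\nsubseteq A^{(n)}$ for all $n$) plays no role in the corollary's statement and is simply dropped; one could of course retain it as a fourth equivalent condition.

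Finally, invoking the equivalence $(1)\Leftrightarrow(3)\Leftrightarrow(4)$ of Proposition~\ref{notB-small} yields the three-way equivalence asserted, and the resulting criterion is the promised reformulation of \cite[Proposition 9]{Dv15}. The only point needing attention — and it is entirely routine — is to verify that the substitution $B=A$ is carried out consistently throughout and that ``self-small'' is correctly identified with ``$A$-small'' via the definitions; there is no genuine obstacle, the corollary being a pure specialization of the preceding proposition.
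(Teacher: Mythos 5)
Your proposal is correct and matches the paper's intent exactly: the corollary is obtained by specializing Proposition~\ref{notB-small} to $B=A$, identifying ``self-small'' with ``$A$-small'' and $\Hom(A,A)$ with $\End(A)$. The paper offers no further argument, so nothing is missing.
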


\begin{exm} \rm
Put $P=\prod_{p \in \Pb}\Z_p$. Then  $\Hom(\Q,P)=0$ by \cite[Example 4]{Dv15}, hence $\Q$ is $P$-small. On the other hand, if we put 
$B= P/\bigoplus_{p \in \Pb}\mathbb{Z}_p$, then there exists
exists an $\omega$-filtration
$(B_i\mid i<\omega)$ of $B$ such that $\Hom(B/B_n,\mathbb Q)\ne 0$ for each $n$ by \cite[Example 3]{Dv15}. If we take preimages $A_n$ of all $B_n$ in canonical projection $P\to P/\bigoplus_{p \in \Pb}\mathbb{Z}_p$, then 
$(A_i\mid i<\omega)$ forms an $\omega$-filtration of $A$ satisfying
$\Hom(A/A_n,\mathbb Q)\cong\Hom(B/B_n,\mathbb Q)\ne 0$, hence $P$ is not $\Q$-small by Proposition~\ref{B-small} (equivalently, we  could use \cite[Corollary 4.3.]{ABS}).
\end{exm}

\section{Closure properties of relative smallness}

First, let us formulate several elementary relations between classes of relatively small groups.

\begin{lm}\label{factor}  Let $A$, $B$ and $C$ be abelian groups and $I$ be a set. Suppose that $A$ is $B$-small.
\begin{enumerate}
\item If $C$ is a subgroup of $A$, then $A/C$ is $B$-small.
\item If $C$ is embeddable into $B^I$, then $A$ is $C$-small.
\end{enumerate}
\end{lm}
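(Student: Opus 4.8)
The plan is to use the characterization of $B$-smallness from Lemma~\ref{B-small}: a group is $B$-small exactly when every homomorphism into $B^{(I)}$ has image landing in a finite subsum. Both parts should follow by transporting a given homomorphism along a suitable map and applying this criterion together with the hypothesis that $A$ is $B$-small.

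For part (1), I would start with an arbitrary index set $I$ and a homomorphism $g \in \Hom(A/C, B^{(I)})$. Composing with the canonical projection $\pi \colon A \to A/C$ gives $g\pi \in \Hom(A, B^{(I)})$. Since $A$ is $B$-small, Lemma~\ref{B-small} yields a finite subset $F \subseteq I$ with $g\pi(A) \subseteq B^{(F)}$. But $\pi$ is surjective, so $g(A/C) = g\pi(A) \subseteq B^{(F)}$, and another application of Lemma~\ref{B-small} shows $A/C$ is $B$-small. This part is essentially immediate.

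For part (2), fix an embedding $\iota \colon C \hookrightarrow B^I$ and take any index set $J$ and $f \in \Hom(A, C^{(J)})$. The natural inclusion $C^{(J)} \hookrightarrow (B^I)^{(J)}$ composed with $f$ produces a homomorphism from $A$ into $(B^I)^{(J)}$. The key point is that $(B^I)^{(J)}$ embeds into $\bigl(B^{(J)}\bigr)^I$ (send a finitely supported family of $I$-tuples to the $I$-tuple of finitely supported families — the total support over $J$ stays finite), and by Corollary~\ref{power} $A$ is $B^{(J)}$-small, hence $A$ is $\bigl(B^{(J)}\bigr)^I$-small by the (already established) fact that relative smallness passes to subgroups of powers; actually it is cleaner to argue directly. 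Let me instead reorganize: compose $f$ with $C^{(J)} \hookrightarrow C^I{}^{(J)} \cong \dots$ — the honest route is to note that $C^{(J)} \subseteq (B^I)^{(J)} \subseteq (B^{(J)})^I$ as above, so $f$ gives an element of $\Hom(A, (B^{(J)})^I)$; composing with each of the $I$ projections $\rho_i \colon (B^{(J)})^I \to B^{(J)}$ and using that $A$ is $B^{(J)}$-small (Corollary~\ref{power}) — wait, $B^{(J)}$-smallness controls maps into $(B^{(J)})^{(K)}$, not into a power. So the genuinely needed input is: if $A$ is $B$-small and $C \hookrightarrow B^I$, then $A$ is $C$-small; one reduces a map $f \in \Hom(A, C^{(J)})$ to a map into $(B^{(J)})^I$, and one checks that the image of $A$, being a homomorphic image controlled coordinatewise, has finite support in $J$ — each coordinate $i \in I$ contributes finite support by $B$-smallness of $A$ applied to $\rho_i \circ (\text{inclusion}) \circ f \in \Hom(A, B^{(J)})$, but there are possibly infinitely many $i$; this is exactly the subtlety, and it is resolved because $f(A) \subseteq C^{(J)}$ already has, for each $a$, finite support, and the embedding $C \hookrightarrow B^I$ does not enlarge the $J$-support. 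So the $J$-support of $f(A)$ equals the $J$-support of its image in $(B^{(J)})^I$ under the single coordinate $i=0$... no. The clean statement: $f(A) \subseteq C^{(J)}$, apply $\iota^{(J)} \colon C^{(J)} \to (B^I)^{(J)} \subseteq (B^{(J)})^{I}$; now pick ANY $i \in I$ such that the $i$-th coordinate map $C \to B$ is nonzero on enough of $C$ — this does not obviously work either. The right move, and the one I would commit to: observe $(B^I)^{(J)} = (B^{(J')})^I$ is false in general, so instead use that $C^{(J)} \hookrightarrow B^{I \times J}$ has image inside $(B^{I})^{(J)}$, which sits inside $(B^{(J)})^I$; then $A$ is $(B^{(J)})^I$-small would need $A$ to be $B^{(J)}$-small AND smallness to pass to powers — and smallness does NOT pass to arbitrary powers.

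Given this, the actual argument must be: let $f \in \Hom(A, C^{(J)})$, compose with $\iota^{(J)}$ to land in $(B^{I})^{(J)}$, then apply the canonical map into $\bigl(B^{(J)}\bigr)^{I}$; for each $i$, the composite $g_i \in \Hom(A, B^{(J)})$ need not individually certify a global finite $F$, BUT $f(A) \subseteq C^{(J)}$ means $\bigcup_{a} \mathrm{supp}_J(f(a))$ is what we must show finite, and if it were infinite we could build a homomorphism $A \to B^{(\omega)}$ with unbounded support by selecting, for each $n$, an $a_n$ with new $J$-support and a coordinate $i_n$ witnessing it; projecting suitably yields a single map $A \to B^{(\omega)}$ contradicting $B$-smallness via Proposition~\ref{notB-small}. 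The main obstacle is precisely making this diagonal/selection argument precise so that infinitely many "bad" $i$-coordinates are amalgamated into one offending homomorphism into $B^{(\omega)}$; once that is set up, Lemma~\ref{B-small} applied to $A$ being $B$-small closes the case, and then Lemma~\ref{B-small} in the other direction gives $C$-smallness of $A$.
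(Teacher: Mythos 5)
Part (1) of your proposal is correct and complete, and is in fact more direct than the paper's own proof, which argues by contraposition through the $\omega$-filtration criterion of Proposition~\ref{notB-small}; your observation that $g(A/C)=g\pi(A)$ for the surjection $\pi\colon A\to A/C$ reduces everything to one application of Lemma~\ref{B-small}. For part (2), your final plan is the right one and the ``obstacle'' you flag is not really one; here is the short completion. Suppose $f\in\Hom(A,C^{(J)})$ and $f(A)\nsubseteq C^{(F)}$ for every finite $F\subseteq J$. Then there are distinct $j_0,j_1,\dots\in J$ with $\pi_{j_n}f\ne 0$ in $\Hom(A,C)$. Viewing $C\le B^I$, for each $n$ pick $i_n\in I$ such that $g_n:=\rho_{i_n}\pi_{j_n}f\ne 0$ in $\Hom(A,B)$, where $\rho_i\colon B^I\to B$ is the $i$-th coordinate projection. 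The map $h(a)=(g_n(a))_{n<\omega}$ lands in $B^{(\omega)}$ because $f(a)$ has finite $J$-support and the $j_n$ are distinct, and $h(A)\nsubseteq B^{(n)}$ for every $n$ because $g_n\ne0$; this contradicts Lemma~\ref{B-small} applied to the $B$-small group $A$. So your diagonal argument does close. Note, however, that the paper reaches (2) with no support bookkeeping at all: by Proposition~\ref{notB-small}, if $A$ were not $C$-small there would be an $\omega$-filtration $(A_n\mid n<\omega)$ with $\Hom(A/A_n,C)\ne0$ for all $n$, and composing a nonzero map $A/A_n\to C\le B^I$ with a suitable coordinate projection gives $\Hom(A/A_n,B)\ne0$ for all $n$, contradicting $B$-smallness of $A$; the amalgamation into a single map to $B^{(\omega)}$ is done once and for all inside the proof of that proposition. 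Finally, you should delete the intermediate false starts in your part (2): as you correctly note yourself, relative smallness does not pass to infinite powers, so the detour through $(B^{(J)})^I$-smallness cannot work and only obscures the (ultimately correct) argument.
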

\begin{proof} (1) Proving indirectly, we assume that $\overline{A}=A/C$ is not $B$-small.
 Then there exists  an $\omega$-filtration $(\overline{A}_i\mid i<\omega)$ of $\overline{A}$ for which $\Hom(\overline{A}/\overline{A}_n,B)\ne 0$ for all $n<\omega$ by Proposition~\ref{notB-small}. If we lift all the groups of the $\omega$-filtration of $\overline{A}$ to the $\omega$-filtration $(A_i\mid i<\omega)$ of $A$ satisfying the conditions $C\le A_n$ and $A_n/C=\overline{A}_n$ for each $n$, then $\Hom(A/A_n,B)\cong\Hom(\overline{A}/\overline{A}_n,B)\ne 0$, hence $A$ is not $B$-small by Proposition~\ref{notB-small}.

(2) We may suppose w.l.o.g. that $C\le B^I$. Assume $A$ is not $C$-small and consider the $\omega$-filtration $(A_i\mid i<\omega)$ of $A$ for which $\Hom(A/A_n,C)\ne 0$ provided by Proposition~\ref{notB-small}. Then we have $\Hom(A/A_n,B^I)\ne 0$ for each $n<\omega$ and since for each nonzero $f_n\in\Hom(A/A_n,B^I)$ there exists $i\in I$ such that $\pi_if_n\ne 0$, we conclude that $\Hom(A/A_n,B)\ne 0$ for every $n<\omega$, a contradiction.
\end{proof}

\begin{prop}\label{self-small-image}  Let $A$ be a self-small abelian group.
\begin{enumerate}
\item If $f\in\Hom(A,A^I)$ for an index set $I$, then $f(A)$ is self-small.
\item If $I\subseteq \End(A)$, then $A/\bigcap\{\ker \iota\mid \iota\in I\}$ is self-small.
\end{enumerate} 
\end{prop}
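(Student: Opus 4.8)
The plan is to derive both parts from the closure properties already established, principally Lemma~\ref{factor} together with Lemma~\ref{Cc-small} and Corollary~\ref{power}. The common idea is that $A$ self-small means $A$ is $A$-small, and the target group in each case embeds into a suitable power or direct sum built from copies of $A$; then Lemma~\ref{factor}(1) lets us pass to quotients and Lemma~\ref{factor}(2) lets us pass from $A$-smallness to smallness over a subgroup of a power of $A$. What we must finally argue is that being $B$-small for the relevant $B$ coincides with $B$ being self-small, and that is where one has to be a little careful about the direction of the implications.

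For part (1): given $f\in\Hom(A,A^I)$, write $C=\ker f$, so $f(A)\cong A/C$. First, by Corollary~\ref{M^k} (or directly from the self-smallness of $A$ via Lemma~\ref{B-small}) the image $f(A)$ lands in a finite subsum $A^{(F)}$ for a finite $F\subseteq I$, hence $f(A)$ embeds into the finite direct sum $A^{n}$ with $n=|F|$, which is self-small by Corollary~\ref{M^k}. Now I would apply Lemma~\ref{factor}(1) with the self-small group $A^{n}$ in the role of $A$: since $f(A)$ is a subgroup of $A^{n}$ and $A^{n}$ is $A^{n}$-small, we want a quotient statement. More precisely, regard $A^{n}$ as the ambient self-small group; $f(A)$ is a \emph{subgroup}, not a quotient, so I instead use Lemma~\ref{factor}(2): $A^{n}$ is $A^{n}$-small and $f(A)\le A^{n}$, so $A^{n}$ is $f(A)$-small; that is not yet what we need either. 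The cleanest route is: $f(A)$ is a homomorphic image of $A$, and $A$ is self-small, so by Lemma~\ref{factor}(1) $f(A)=A/C$ is $A$-small; since $f(A)$ embeds into $A^{n}\subseteq A^{I}$, Lemma~\ref{factor}(2) (with $B=A$, $C=f(A)$, $I$ finite or arbitrary) gives that $f(A)$ is $f(A)$-small, i.e. self-small. This is the argument I would write out.

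For part (2): set $D=\bigcap\{\ker\iota\mid\iota\in I\}$ and consider the map $g=(\iota)_{\iota\in I}\colon A\to A^{I}$, $a\mapsto(\iota(a))_{\iota\in I}$. Its kernel is exactly $D$, so $A/D\cong g(A)\le A^{I}$, and part (1) applied to $g$ immediately yields that $A/D$ is self-small. So (2) is a formal consequence of (1); I would state it that way to keep the write-up short.

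The main obstacle, and the only genuinely delicate point, is keeping the two lemmas pointed in the right direction: Lemma~\ref{factor}(1) turns ``$A$ is $B$-small'' into ``$A/C$ is $B$-small'', and Lemma~\ref{factor}(2) turns ``$A$ is $B$-small'' into ``$A$ is $C$-small'' when $C\hookrightarrow B^{I}$. To conclude $f(A)$ is \emph{self}-small we need $f(A)$ in the first slot \emph{and} in the second slot, so we must chain them: first use (1) to get $f(A)=A/C$ is $A$-small, then use (2) with $B=A$ and the embedding $f(A)\hookrightarrow A^{I}$ to upgrade this to $f(A)$ being $f(A)$-small. Once that chaining is set up correctly, everything else is routine, and part (2) needs no separate work beyond exhibiting the map $g$ with kernel $D$.
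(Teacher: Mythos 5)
Your final chain of reasoning --- Lemma~\ref{factor}(1) gives that $f(A)=A/\ker f$ is $A$-small, then Lemma~\ref{factor}(2) with the embedding $f(A)\hookrightarrow A^I$ upgrades this to $f(A)$ being $f(A)$-small, with part (2) reduced to part (1) via the diagonal map $g$ whose kernel is $\bigcap\{\ker\iota\mid\iota\in I\}$ --- is exactly the paper's proof. The intermediate detour through $A^n$ and Corollary~\ref{M^k} is unnecessary but harmless; the argument is correct.
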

\begin{proof} (1) Since $A$ is $A$-small, $f(A)$ is $A$-small by Lemma~\ref{factor}(1). Thus $f(A)$ is $f(A)$-small by Lemma~\ref{factor}(2).

(2) If $\varphi: A\to A^I$ is defined by the rule $\pi_\iota\varphi=\iota$ for each $\iota\in I$, then $\ker\varphi =\bigcap\{\ker \iota\mid \iota\in I\}$, hence $A/\bigcap\{\ker \iota\mid \iota\in I\}\cong f(A)$ is self-small by (1) (cf. also \cite[Example 2.10]{DZ21}). 
\end{proof}

The next assertion describes closure properties concerning extensions.

\begin{prop}\label{sub-small}  Let $A$ and $C$ be abelian groups and $B\le C$.  
\begin{enumerate}
\item If both $B$ and $C/B$ are $A$-small, then $C$ is $A$-small.
\item If $A$ is $B$-small and $C/B$-small, then $A$ is $C$-small.
\end{enumerate}
\end{prop}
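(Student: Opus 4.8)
The plan is to establish both parts by a two-stage application of the support criterion in Lemma~\ref{B-small}: one finite index set comes from the subgroup part of the extension, the other from the quotient part, and the two are arranged to live in complementary index sets so that their union works for all of $f$.

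For (1), take an index set $I$ and $f\in\Hom(C,A^{(I)})$. Restricting $f$ to $B$ and using that $B$ is $A$-small yields a finite $F_1\subseteq I$ with $f(B)\subseteq A^{(F_1)}$. Let $\pi\colon A^{(I)}\to A^{(I\setminus F_1)}$ be the projection; then $\pi f$ vanishes on $B$, which is the kernel of the canonical projection $q\colon C\to C/B$, so it factors as $\pi f=g\circ q$ for some $g\in\Hom(C/B,A^{(I\setminus F_1)})$. Since $C/B$ is $A$-small, $g(C/B)\subseteq A^{(F_2)}$ for some finite $F_2\subseteq I\setminus F_1$, i.e.\ $\pi f(C)\subseteq A^{(F_2)}$. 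For $c\in C$ the $F_1$-component of $f(c)$ lies in $A^{(F_1)}$ by construction and the complementary component equals $\pi f(c)\in A^{(F_2)}$, so $f(C)\subseteq A^{(F_1\cup F_2)}$ with $F_1\cup F_2$ finite, and Lemma~\ref{B-small} gives that $C$ is $A$-small.

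For (2), take an index set $I$ and $f\in\Hom(A,C^{(I)})$. Using $B^{(I)}\le C^{(I)}$ together with the natural isomorphism $C^{(I)}/B^{(I)}\cong (C/B)^{(I)}$ (exactness of direct sums), compose $f$ with the induced projection $C^{(I)}\to (C/B)^{(I)}$; as $A$ is $C/B$-small its image lies in $(C/B)^{(F_1)}$ for some finite $F_1\subseteq I$. Unwinding this inclusion, and writing $C^{(I)}=C^{(F_1)}\oplus C^{(I\setminus F_1)}$, we obtain $f(A)\subseteq C^{(F_1)}\oplus B^{(I\setminus F_1)}$. The projection of $f$ onto the second summand is then a homomorphism $A\to B^{(I\setminus F_1)}$, and since $A$ is $B$-small its image sits inside $B^{(F_2)}$ for some finite $F_2\subseteq I\setminus F_1$. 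Hence $f(A)\subseteq C^{(F_1)}\oplus B^{(F_2)}\subseteq C^{(F_1\cup F_2)}$, and Lemma~\ref{B-small} applies again.

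The genuinely routine steps are the factoring-through-the-quotient in (1) and the identification of the preimage of $(C/B)^{(F_1)}$ in (2); no input beyond Lemma~\ref{B-small} seems needed. The one place that requires attention — and what I expect to be the main (mild) obstacle — is the bookkeeping that keeps $F_1$ and $F_2$ in complementary index sets, so that $F_1\cup F_2$ is finite and simultaneously absorbs both the subgroup-part and the quotient-part of $f$; handling this carelessly would only give control of one piece of $f$ at a time.
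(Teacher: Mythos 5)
Your proof is correct, and it takes a genuinely different route from the paper's. You verify the support criterion of Lemma~\ref{B-small} directly: in (1) you split $I$ into a finite set $F_1$ controlling $f|_B$ and its complement, on which $\pi f$ kills $B$ and hence factors through $C/B$; in (2) you first push $f$ into $(C/B)^{(I)}$ to trap all but finitely many coordinates inside $B$, then apply $B$-smallness to the remaining coordinates. The bookkeeping keeping $F_2\subseteq I\setminus F_1$ is exactly right, and the factorization and the identification $f(A)\subseteq C^{(F_1)}\oplus B^{(I\setminus F_1)}$ are both legitimate. The paper instead argues by contradiction via the $\omega$-filtration characterization of relative non-smallness (Proposition~\ref{notB-small}): for (1) it intersects and projects a filtration of $C$ to filtrations of $B$ and of $C/B$ and shows that any $f\in\Hom(C,A)$ vanishing on a suitable $C_n$ must vanish on all of $C$; for (2) it shows $\Hom(A/A_k,C)=0$ for a suitable $k$ by passing through $\Hom(A/A_k,B)$ and $\Hom(A/A_k,C/B)$. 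Your argument is more elementary and self-contained, needing only Lemma~\ref{B-small} and working with arbitrary index sets at once; the paper's version is shorter given the machinery of Proposition~\ref{notB-small} and fits its systematic use of $\omega$-filtrations, where the analogue of your complementary-index-set bookkeeping is the choice of a single index $n$ (or $k\ge n$) that works simultaneously for both outer terms of the extension.
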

\begin{proof} Similarly as in Lemma~\ref{factor}, we will use throughout the whole proof the correspondence of relative nonsmallness and properties of $\omega$-filtrations given by Proposition~\ref{notB-small}.

(1)  Suppose that $(C_n\mid n<\omega)$ is an $\omega$-filtration of $C$. Then 
$(C_n\cap B\mid n<\omega)$ is  an $\omega$-filtration of $B$ 
and $(C_n+B/B\mid n<\omega)$ is an $\omega$-filtration of $C/B$.
Since $B$ and $C/B$ are $A$-small, there exists $n$ such that $f(B)=0$ whenever $f\in \Hom(C,A)$ satisfies $f(B\cap C_n)=0$,
and $\tilde{f}(C/B)=0$ whenever $\tilde{f}\in \Hom(C/B,A)$ satisfies $\tilde{f}(C_n+B/B)=0$.

Let $f\in \Hom(C,A)$ such that $f(C_n)=0$, then $f(B)=0$ as $f(C_n\cap B)=0$ and 
there exists $\tilde{f}\in \Hom(C/B,A)$ for which $\tilde{f}\pi_B=f$. Now, $\tilde{f}(C/B)=0$ since $\tilde{f}(C_n+B/B)=0$, hence $f=\tilde{f}\pi_B =0$.
We have proved that $C$ is an $A$-small group.

(2) Similarly, suppose that $(A_n\mid n<\omega)$ is an $\omega$-filtration of $A$. Since $A$ is $B$-small, there exists $n$ for which $\Hom(A/A_n,B)=0$, and so $\Hom(A/A_i,B)=0$ for each $i\ge n$. If $f\in \Hom(A/A_i,C)$ is nonzero, then $\pi_Bf\in \Hom(A/A_i,C/B)$ is nonzero because $f(A/A_i)\nsubseteq B$ for each $i\ge n$. Since there exists $k\ge n$ for which $\Hom(A/A_k,C/B)=0$ again, we get $\Hom(A/A_k,C)=0$, hence $A$ is $C$-small.
\end{proof}

\begin{exm}\label{Ex-ext}\rm
The implication of the previous claim cannot be reversed: 

(1) $\prod_{p\in\Pb}\Z_p$ is self-small by \cite[Theorem 2.5 and Example 2.7]{Z08}, but $\bigoplus_{p\in\Pb}\Z_p$ is not $\prod_{p\in\Pb}\Z_p$-small.

(2) Since $\Hom(\Q/\Z,\Q)=0$, the group $\Q/\Z$ is $\Q$-small, but  
$\Q/\Z$ is not $\Q/\Z$-small.
\end{exm}

\begin{lm}\label{sum}  Let $A$ be an abelian group and $\M$ a finite family of abelian groups.
\begin{enumerate}
\item If $N$ is $A$-small for each $N\in\M$, then $\bigoplus\M$ is $A$-small.
\item If $A$ is $N$-small for each $N\in\M$, then $A$ is $\bigoplus\M$-small.
\end{enumerate}
\end{lm}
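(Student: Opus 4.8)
The plan is to reduce both parts to the case of two summands and then quote Proposition~\ref{sub-small}, running an induction on the cardinality of the finite family $\M$. Write $\M=\{N_1,\dots,N_k\}$; repetitions among the $N_j$ are harmless. If $k\le 1$ there is nothing to prove, so suppose $k\ge 2$, put $C=\bigoplus\M=N_1\oplus\cdots\oplus N_k$ and $B=N_1\oplus\cdots\oplus N_{k-1}$, so that $B\le C$ and $C/B\cong N_k$.

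For (1): the inductive hypothesis applied to $\{N_1,\dots,N_{k-1}\}$ gives that $B$ is $A$-small, while $C/B\cong N_k$ is $A$-small by hypothesis; hence $C=\bigoplus\M$ is $A$-small by Proposition~\ref{sub-small}(1). For (2): the inductive hypothesis gives that $A$ is $B$-small, and $A$ is $C/B$-small since $C/B\cong N_k$ and $A$ is $N_k$-small by hypothesis; thus $A$ is $C$-small by Proposition~\ref{sub-small}(2).

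Alternatively, both parts fall out directly from Lemma~\ref{B-small}. For (1), given $f\in\Hom(\bigoplus\M,A^{(I)})$, each restriction $f|_{N_j}$ satisfies $f(N_j)\subseteq A^{(F_j)}$ for some finite $F_j\subseteq I$ since $N_j$ is $A$-small, so $F=\bigcup_{j=1}^k F_j$ is finite and $f(\bigoplus\M)=\sum_{j=1}^k f(N_j)\subseteq A^{(F)}$. For (2), using the identification $(\bigoplus\M)^{(I)}\cong\bigoplus_{j=1}^k N_j^{(I)}$, one composes $f\in\Hom(A,(\bigoplus\M)^{(I)})$ with the projection onto the $j$-th block to obtain a map landing in some $N_j^{(F_j)}$ with $F_j$ finite since $A$ is $N_j$-small, and then $F=\bigcup_{j=1}^k F_j$ works. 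There is essentially no obstacle here; the only points deserving a word are the finiteness of $\M$ — which is precisely what makes $\bigcup_{j=1}^k F_j$ finite and lets the induction terminate — and, in part (2), the routine rearrangement $(\bigoplus\M)^{(I)}\cong\bigoplus_{j=1}^k N_j^{(I)}$.
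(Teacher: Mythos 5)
Your first argument is essentially the paper's proof: the paper also inducts on $|\M|$, splitting off one summand $N$ (so that $\bigoplus\M$ modulo $N$ is isomorphic to $\bigoplus(\M\setminus\{N\})$) and invoking Proposition~\ref{sub-small}(1) and (2) respectively. Your alternative direct argument via Lemma~\ref{B-small} is also correct and is, if anything, more self-contained, since it bypasses the $\omega$-filtration machinery behind Proposition~\ref{sub-small}; as you note, the identification $(\bigoplus\M)^{(I)}\cong\bigoplus_{j}N_j^{(I)}$ and the finiteness of $\bigcup_j F_j$ are the only places where finiteness of $\M$ enters.
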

\begin{proof}  Put $M=\bigoplus \M$. Both of the proofs proceed by induction on the cardinality of $\M$.

(1) If $|\M|\le 1$, there is nothing  to prove. 
Let the assertion hold true for $|\M|-1$ and put $M_N=\bigoplus \M\setminus\{N\}$ for arbitrary $N\in\M$.
Since  $M_N$ is $A$-small by the induction hypothesis,  $N$ is $A$-small by the hypothesis and $M/N\cong M_N$, we get that $M$ is $A$-small by Proposition~\ref{sub-small}(1).

(2) The same induction argument as in (1) shows $A$ is $M$-small by Lemma~\ref{sub-small}(2), since $A$ is $N$-small by the hypothesis and it is $M_N$-small for each $N\in\M$ by the induction hypothesis.
\end{proof}

As the main result of the section we describe which finite sums of relatively small abelian groups are again relatively small.

\begin{prop}\label{sum-small}  
Let $\M$ and $\Nc$ be finite families of abelian groups. 
The following conditions are equivalent:
 \begin{enumerate}
\item $\bigoplus\M$ is $\bigoplus\Nc$-small,
\item $M$ is $\bigoplus\Nc$-small for each $M\in \M$,
\item $\bigoplus\M$  is $N$-small for each $N\in \Nc$,
\item $M$ is $N$-small for each $M\in \M$ and $N\in \Nc$,
\item for each $M\in \M$, $N\in \Nc$, and $\omega$-filtration $(M_i\mid i<\omega)$ of $M$, there exist $i<\omega$ with $\Hom(M/M_i, N) = 0$.
\end{enumerate}
\end{prop}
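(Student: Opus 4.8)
The plan is to close the cycle $(1)\Rightarrow(2)\Rightarrow(4)\Rightarrow(3)\Rightarrow(1)$ using only Lemma~\ref{factor} and Lemma~\ref{sum}, and then to observe that $(4)\Leftrightarrow(5)$ is nothing but Proposition~\ref{notB-small} read contrapositively, pair by pair. The finiteness of $\M$ and $\Nc$ is used exactly where Lemma~\ref{sum} is invoked; in the two ``direct summand'' steps it plays no role.

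For $(1)\Rightarrow(2)$, fix $M\in\M$: since $M$ is a direct summand of $\bigoplus\M$ it is isomorphic to a quotient of $\bigoplus\M$, so $M$ is $\bigoplus\Nc$-small by Lemma~\ref{factor}(1). For $(2)\Rightarrow(4)$, fix $M\in\M$ and $N\in\Nc$: as $N$ is a direct summand, hence a subgroup, of $\bigoplus\Nc$, it embeds into $(\bigoplus\Nc)^I$ for a one-element set $I$, so Lemma~\ref{factor}(2) upgrades ``$M$ is $\bigoplus\Nc$-small'' to ``$M$ is $N$-small''. For $(4)\Rightarrow(3)$, fix $N\in\Nc$: every $M\in\M$ is $N$-small and $\M$ is finite, so $\bigoplus\M$ is $N$-small by Lemma~\ref{sum}(1). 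For $(3)\Rightarrow(1)$: $\bigoplus\M$ is $N$-small for every member $N$ of the finite family $\Nc$, so $\bigoplus\M$ is $\bigoplus\Nc$-small by Lemma~\ref{sum}(2).

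Finally, $(4)\Leftrightarrow(5)$: for a fixed pair $(M,N)$, Proposition~\ref{notB-small} (equivalence of its conditions (1) and (4), applied with $A=M$ and $B=N$) says that $M$ fails to be $N$-small precisely when there is an $\omega$-filtration $(M_i\mid i<\omega)$ of $M$ with $\Hom(M/M_n,N)\ne0$ for all $n<\omega$; negating the quantifiers, $M$ is $N$-small precisely when every $\omega$-filtration of $M$ has some member $M_i$ with $\Hom(M/M_i,N)=0$. Quantifying over all $M\in\M$ and $N\in\Nc$ yields $(4)\Leftrightarrow(5)$. I do not anticipate a genuine obstacle here: the whole argument is bookkeeping in terms of the already-established lemmas. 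The only points to watch are that a family may contain several copies of a group (so ``direct summand of $\bigoplus\M$'' and ``deleting a member from $\M$'' must be read at the level of indices), and that Lemma~\ref{sum} really does need the finiteness hypothesis, so that dropping it would break precisely the steps $(4)\Rightarrow(3)$ and $(3)\Rightarrow(1)$.
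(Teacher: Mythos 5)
Your proof is correct and follows essentially the same route as the paper: Lemma~\ref{factor}(1) for passing to summands of $\bigoplus\M$, Lemma~\ref{factor}(2) for passing to summands of $\bigoplus\Nc$, Lemma~\ref{sum} for the two reassembling steps, and Proposition~\ref{notB-small} for $(4)\Leftrightarrow(5)$. The only difference is cosmetic: you arrange the implications into a single cycle, while the paper proves a few of them separately.
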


\begin{proof}
(1)$\Rightarrow$(2) Put $F_M:=\bigoplus(\M\setminus\{M\})\le \bigoplus\M$ and once $(\bigoplus\M)/F_M\cong M$, the claim follows from  Lemma~\ref{factor}(1).

(1)$\Rightarrow$(3) Since $N\le \bigoplus\Nc$ the assertion is clear by Lemma~\ref{factor}(2).

(2)$\Rightarrow$(4), (3)$\Rightarrow$(4) It follows from  Lemma~\ref{factor} again.

The implication (4)$\Rightarrow$(3) is a consequence of  Lemma~\ref{sum}(1), while
the implication (3)$\Rightarrow$(1) is shown in Lemma~\ref{sum}(2).

(4)$\Leftrightarrow$(5) It is an immediate consequence of Proposition~\ref{notB-small}.
\end{proof}

As a consequence we reformulate \cite[Proposition 5]{Dv15}:

\begin{cor}\label{finite_sums} The following conditions are equivalent for a finite family of  abelian groups $\M$ and $M=\bigoplus \M$:
 \begin{enumerate}
\item $M$ is self-small,
\item $N_1$ is $N_2$-small for each $N_1,N_2\in \M$,
\item for every  $N_1,N_2\in \M$ and $\omega$-filtration $(M_i\mid i<\omega)$ of $N_1$ there exist $i<\omega$ with $\Hom(N_1/M_i, N_2) = 0$.
\end{enumerate}
\end{cor}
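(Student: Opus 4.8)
The plan is to deduce the corollary from Proposition~\ref{sum-small} by taking the two finite families appearing there to be one and the same. First I would record that, by definition, $M=\bigoplus\M$ being self-small means exactly that $\bigoplus\M$ is $\bigoplus\M$-small; this is precisely condition~(1) of Proposition~\ref{sum-small} in the instance obtained by substituting $\Nc:=\M$. Next I would transcribe the remaining relevant conditions of that proposition under this substitution: condition~(4) becomes ``$N_1$ is $N_2$-small for all $N_1,N_2\in\M$'', which is our condition~(2), and condition~(5) becomes ``for all $N_1,N_2\in\M$ and every $\omega$-filtration $(M_i\mid i<\omega)$ of $N_1$ there is $i<\omega$ with $\Hom(N_1/M_i,N_2)=0$'', which is our condition~(3). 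The equivalences (1)$\Leftrightarrow$(4)$\Leftrightarrow$(5) of Proposition~\ref{sum-small} then yield exactly (1)$\Leftrightarrow$(2)$\Leftrightarrow$(3).

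The only point to check before invoking Proposition~\ref{sum-small} is that its hypothesis is met, i.e. that the families in play are finite; this is immediate since $\Nc=\M$ and $\M$ is finite by assumption. I do not expect any genuine obstacle here: the content is already packaged in Proposition~\ref{sum-small}, and conditions~(2) and~(3) of that proposition collapse into~(4) once $\Nc=\M$, which is why the corollary carries three conditions rather than five. For completeness one could instead argue directly, using Lemma~\ref{factor} (summand-and-quotient description of $M$) for (1)$\Rightarrow$(2), Lemma~\ref{sum} (summing first over the second variable, then over the first) for (2)$\Rightarrow$(1), and Proposition~\ref{notB-small} for (2)$\Leftrightarrow$(3); but routing through Proposition~\ref{sum-small} is the most economical presentation.
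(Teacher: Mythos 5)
Your proposal is correct and is exactly how the paper obtains this corollary: it is stated as an immediate consequence of Proposition~\ref{sum-small} with $\Nc=\M$, so that condition (1) there reads ``$M$ is self-small'' and conditions (4) and (5) specialize to (2) and (3) respectively. No gaps.
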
 

\begin{exm} \rm Since $\Hom(\Q,\Z)=0$ and $\Q$ is self-small and $\Z$ is small so $\Z$-small and $\Q$-small, the group $\Z\oplus \Q$ 
is self-small by Corollary~\ref{finite_sums}. 
\end{exm}

\section{Self-small products}

We start the section by a criterion of self-smallness of a general  product (cf. \cite[Theorem 5.4]{DZ21}).

\begin{thm}\label{criterion}  Let $\M$ be a family of abelian groups and put $M=\prod\M$ and $S=\bigoplus\M$.
Then the following conditions are equivalent:
\begin{enumerate}
\item $M$ is self-small,
\item $M$ is $S$-small,
\item $M$ is $\bigoplus\Cc$-small for each countable family $\Cc\subseteq \M$.
\end{enumerate}
\end{thm}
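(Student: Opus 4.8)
The equivalences $(1)\Leftrightarrow(2)$ and $(2)\Leftrightarrow(3)$ are of quite different flavours, so I would treat them separately; the implications $(1)\Rightarrow(2)$ and $(3)\Rightarrow(2)$ are trivial (a direct summand $S$ of $M$, resp. a direct summand $\bigoplus\Cc$ of $S$, is $\Cc$-small by Lemma~\ref{factor}(2) once $M$ is $S$-small, and $\bigoplus\Cc\le S$; and $(1)\Rightarrow(2)$ because $S\le M$... wait — $S$ is \emph{not} in general a direct summand of $M$, so $(1)\Rightarrow(2)$ needs Lemma~\ref{factor}(2): $S=\bigoplus\M$ embeds in $M^{\,|\M|}$ via the diagonal-type map, or more simply $S\le M$, but Lemma~\ref{factor}(2) needs embeddability into $M^I$, which holds since $S\le M=M^{\{*\}}$). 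So the real content is $(2)\Rightarrow(1)$ and $(3)\Rightarrow(2)$ — actually $(2)\Rightarrow(3)$ is again immediate from Lemma~\ref{factor}(2) since $\bigoplus\Cc\le S\le M$, so the heart of the matter is $(3)\Rightarrow(2)$ together with $(2)\Rightarrow(1)$.

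**Step for $(2)\Rightarrow(1)$.** Suppose $M$ is $S$-small but not self-small. By Proposition~\ref{notB-small} there is $f\in\Hom(M,M^{(\omega)})$ with $f(M)\not\subseteq M^{(n)}$ for all $n$. The idea is to show that $f$ actually lands (componentwise) in $S^{(\omega)}$, contradicting $S$-smallness via Lemma~\ref{B-small}. The key point is that each composite $\pi_n f : M\to M$ (projection onto the $n$-th copy of $M$ in $M^{(\omega)}$) is an endomorphism of the product $M=\prod\M$, and one needs a structural fact: a ``bounded'' or suitably small image inside $\prod\M$ lies in $\bigoplus\M$. More precisely, I expect to use that for a product $M=\prod\M$, any endomorphism whose restriction behaves finitely — arising from the self-small test — has image meeting all but finitely many coordinates trivially in a uniform way; this is where slenderness-type arguments (the paper's second keyword) enter. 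I would exploit that $M/S$, or the relevant sections, are ``large'' in a way incompatible with the $\omega$-filtration condition, translating the failure of self-smallness of $M$ into a failure witnessed already by maps into $S$.

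**Step for $(3)\Rightarrow(2)$.** Assume $M$ is $\bigoplus\Cc$-small for every countable $\Cc\subseteq\M$ but not $S$-small. By Lemma~\ref{B-small} there is $f\in\Hom(M,S^{(I)})$ with $f(M)\not\subseteq S^{(F)}$ for any finite $F\subseteq I$; equivalently, by Proposition~\ref{notB-small}, $f\in\Hom(M,S^{(\omega)})$ with $f(M)\not\subseteq S^{(n)}$. Now $S^{(\omega)}=\big(\bigoplus\M\big)^{(\omega)}\cong\bigoplus_{\M\times\omega}$. The composite of $f$ with the inclusion $S^{(\omega)}\hookrightarrow M^{(\omega)}$... no — rather, I would analyse the image more carefully: each coordinate map $M\to S = \bigoplus\M$ is a homomorphism from the product to the coproduct; the crucial reduction is that only countably many $\M$-coordinates are ``hit'' across all the finitely-supported images, so one can replace $\M$ by a countable subfamily $\Cc$, making $f$ factor through $\bigoplus\Cc$ in the target, and then $(3)$ applies to yield a contradiction. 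Extracting the right countable $\Cc$ from the data of $f$ is the technical crux.

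**Main obstacle.** The hard part is $(3)\Rightarrow(2)$: reducing an arbitrary failure of $S$-smallness to one involving only countably many factors. One must show that the homomorphism $f:M\to S^{(\omega)}$ can be ``captured'' by a countable subfamily $\Cc\subseteq\M$ — i.e. that there is a countable $\Cc$ such that $f(M)\subseteq(\bigoplus\Cc)^{(\omega)}$ after composing with the obvious projection, while still $f(M)\not\subseteq(\bigoplus\Cc)^{(n)}$. Here it is tempting but wrong to argue naively coordinate-by-coordinate, since $f$ need not be continuous and the $\M$-support of $f(M)$ inside each copy of $S$ could a priori be uncountable. The resolution should use that each of the countably many maps $\pi_n f: M\to S$ has image contained in a finitely generated, hence countably generated, subgroup of $S=\bigoplus\M$ — no: the image of $\pi_n f$ need not be finitely generated. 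So instead I anticipate using a diagonal/back-and-forth construction together with the $\omega$-filtration characterisation of Proposition~\ref{notB-small}: build the $\omega$-filtration of $M$ and the witnessing nonzero maps $M/M_n\to S$ simultaneously with a nested sequence of countable subfamilies $\Cc_n\subseteq\M$, so that at stage $n$ the relevant map already lands in $\bigoplus\Cc_n$; taking $\Cc=\bigcup_n\Cc_n$ (still countable) then contradicts $(3)$ via Proposition~\ref{notB-small} applied to $M$ and $\bigoplus\Cc$. I would expect the verification that this $\Cc$ works — that the filtration still witnesses non-$\bigoplus\Cc$-smallness — to require Lemma~\ref{factor}(2) (composing with $\bigoplus\Cc\hookrightarrow S$) run in reverse, and a little care that the nonzero maps remain nonzero after restricting their codomain.
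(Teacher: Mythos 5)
Your treatment of the easy implications is fine and matches the paper: $(1)\Rightarrow(2)\Rightarrow(3)$ both follow from Lemma~\ref{factor}(2) because $S\le M$ and $\bigoplus\Cc\le S$. But the two steps you identify as the real content are where the proposal breaks down. For $(2)\Rightarrow(1)$ you propose to take a witnessing $f\in\Hom(M,M^{(\omega)})$ and show it ``lands in $S^{(\omega)}$'' using slenderness-type arguments; this is a wrong turn. There is no reason the image of such an $f$ should lie anywhere near $S^{(\omega)}$ (consider any coordinate map $M\to M$ that is surjective), and slenderness plays no role in this theorem (the paper only needs it much later, for $\Z^\kappa$). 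Your $(3)\Rightarrow(2)$ sketch is closer in spirit but stalls on the ``technical crux'' of capturing the $\M$-support of the images, which, as you yourself note, could a priori be uncountable; the back-and-forth construction you gesture at is never actually specified.

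The missing idea is much simpler, and it is the whole point of the paper's proof: work with condition (4) of Proposition~\ref{notB-small} rather than condition (2), and use that a \emph{nonzero} homomorphism into $\prod\M$ composes with some coordinate projection to a \emph{nonzero} homomorphism into a single $A\in\M$. Concretely, the paper proves $(3)\Rightarrow(1)$ directly (which closes the cycle and makes both of your hard steps unnecessary): if $M$ is not self-small, Proposition~\ref{notB-small} gives an $\omega$-filtration $(M_n)$ with $\Hom(M/M_n,M)\ne 0$ for all $n$; picking for each $n$ a nonzero $f_n\colon M/M_n\to\prod\M$ and a coordinate $A_n\in\M$ with $\pi_{A_n}f_n\ne 0$ yields a \emph{countable} family $\Cc=\{A_n\mid n<\omega\}$; since each $A_n$ embeds in $\bigoplus\Cc$, we get $\Hom(M/M_n,\bigoplus\Cc)\ne 0$ for all $n$, so $M$ is not $\bigoplus\Cc$-small, contradicting (3). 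You never need to control the full image of any map — one nonzero coordinate per $n$ suffices, and there are only countably many $n$. Without this observation your argument does not go through.
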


\begin{proof} The implications (1)$\Rightarrow$(2)$\Rightarrow$(3) follow from Lemma~\ref{factor}(2), since $S$ is embeddable into $M$ and  $\bigoplus\Cc$ is embeddable into $S$.

(3)$\Rightarrow$(1) Proving indirectly, assume that $M$ is not self-small.
Then there exists  an $\omega$-filtration $(M_i\mid i<\omega)$ of $M$ for which $\Hom(M/M_n,M)\ne 0$ for all $n<\omega$ by Proposition~\ref{notB-small}.
Using the same argument as in the proof of Proposition~\ref{sub-small}(2), for each $n<\omega$ there exists $A_n\in\M$ such that $\Hom(M/M_n,A_n)\ne 0$. If we put $\Cc=\{A_i\mid i<\omega\}$, then all $A_i$'s are embeddable into $\bigoplus\Cc$, hence $\Hom(M/M_n,\bigoplus\Cc)\ne 0$ for each $n<\omega$, which implies that $M$ is not $\bigoplus\Cc$-small by Proposition~\ref{notB-small}.
\end{proof}

As $A^\kappa$ is $A^{(\kappa)}$-small if and only if it is $A$-small by Corollary~\ref{power} we obtain the following consequence of Theorem~\ref{criterion}.

\begin{cor}\label{powerAC} Let  $A$ be an abelian group and $I$ a set. Then $A^I$ is self-small if and only if it is $A$-small.
\end{cor}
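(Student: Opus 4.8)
The plan is to deduce Corollary~\ref{powerAC} directly from Theorem~\ref{criterion} together with Corollary~\ref{power}. Here the family $\M$ consists of $|I|$ copies of $A$, so $M=\prod\M=A^I$ and $S=\bigoplus\M=A^{(I)}$. Theorem~\ref{criterion} gives that $A^I$ is self-small if and only if $A^I$ is $A^{(I)}$-small, i.e. $S$-small. Thus the whole content is to upgrade ``$A^{(I)}$-small'' to the simpler ``$A$-small''.

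First I would dispose of the trivial case $I=\emptyset$ (then $A^I=0$ is self-small and $0$-small and $A$-small vacuously) and the case $A=0$. Assuming $I\neq\emptyset$ and $A\neq 0$, the cardinal $\kappa=|I|$ is nonzero, and $A^{(I)}\cong A^{(\kappa)}$, so Corollary~\ref{power} applies verbatim: a group is $A$-small if and only if it is $A^{(\kappa)}$-small. Applying this with the group in question being $A^I$ itself, we get that $A^I$ is $A^{(I)}$-small if and only if $A^I$ is $A$-small. Chaining this equivalence with the equivalence (1)$\Leftrightarrow$(2) of Theorem~\ref{criterion} yields: $A^I$ is self-small $\iff$ $A^I$ is $A^{(I)}$-small $\iff$ $A^I$ is $A$-small, which is exactly the claim.

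There is essentially no obstacle here; the corollary is a bookkeeping consequence of the two cited results, and the only point requiring a word of care is that Corollary~\ref{power} is stated for a \emph{nonzero} cardinal $\kappa$, which forces the explicit (but harmless) treatment of the empty index set. One might alternatively phrase the argument by noting $\Add(A^{(I)})=\Add(A)$ and invoking Lemma~\ref{Cc-small}, but routing through Corollary~\ref{power} is the most economical presentation and matches the remark already made in the text immediately preceding the statement.
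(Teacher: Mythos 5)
Your proof is correct and follows exactly the paper's route: Theorem~\ref{criterion} reduces self-smallness of $A^I$ to $A^{(I)}$-smallness, and Corollary~\ref{power} identifies $A^{(I)}$-smallness with $A$-smallness. The extra care about $I=\emptyset$ and $A=0$ is a harmless refinement the paper omits.
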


\begin{exm}\label{prod Z_p} {\rm (1) $\Q^{\omega}$ is not self-small, since it is an infinitely generated $\Q$-vector space, hence it is not $\Q$-small.

(2) We have recalled in Example~\ref{Ex-ext} that $\prod_{p\in\Pb}\Z_p$ is self-small, so it is $\bigoplus_{p\in\Pb}\Z_p$-small group by Theorem~\ref{criterion}. }
\end{exm}

Let us denote by $T_A=\bigoplus_{p\in\Pb}A_{(p)}$ the torsion part of an abelian group $A$ where $A_{(p)}$ denotes the $p$-component of the torsion part.

\begin{lm}\label{torsion-prod}  
Let $p\in \Pb$, $P$ be a nonzero $p$-group, $R$ a nonzero torsion group, $\Tc$  a family of finite torsion groups, and $\kappa$ be a cardinal. Then:
 \begin{enumerate}
\item $\Z_p^\kappa$ is $P$-small if and only if $\kappa$ is finite,
\item $\Z^\kappa$ is $R$-small if and only if $\kappa$ is finite,
\item if $\prod\Tc$ is $P$-small, then $\{T\in\Tc\mid T_{(p)}\ne 0\}$ is finite,
\item  if $\prod\Tc$ is $R$-small, then $\{T\in\Tc\mid T_{(p)}\ne 0\}$ is finite for each $p\in \Pb$ satisfying $R_{(p)}\ne 0$.
\end{enumerate}
\end{lm}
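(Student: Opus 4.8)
The plan is to deduce all four items from a single device. Say that a group $G$ \emph{collapses at $p$} if some homomorphic image of $G$ is an infinite-dimensional $\mathbb F_p$-vector space. First I would record the following: if $G$ collapses at $p$ and $\Z/p\Z$ embeds into a group $B$, then $G$ is not $B$-small. Indeed, were $G$ $B$-small, the quotient $Q$ of $G$ that is an infinite-dimensional $\mathbb F_p$-space would be $B$-small by Lemma~\ref{factor}(1) and hence $\Z/p\Z$-small by Lemma~\ref{factor}(2) (as $\Z/p\Z$ embeds into $B=B^1$); but an infinite-dimensional $\mathbb F_p$-vector space $V$ is never $\Z/p\Z$-small, since $V\cong(\Z/p\Z)^{(\omega)}\oplus W$ and a $\Z/p\Z$-smallness of $V$ would pass, by Lemma~\ref{factor}(1), to the summand $(\Z/p\Z)^{(\omega)}$, which is not self-small by Corollary~\ref{M^k} and hence not $\Z/p\Z$-small by Corollary~\ref{power}. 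Finally, $\Z/p\Z$ embeds into every nonzero $p$-group, hence into $P$, into $R_{(p)}\le R$ whenever $R_{(p)}\neq 0$, and thus into $R$.

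For the forward implications in (1) and (2) I would take $\kappa$ infinite and reduce modulo $p$ coordinatewise: this yields a surjection $\Z_p^\kappa\twoheadrightarrow(\Z_p/p\Z_p)^\kappa=(\Z/p\Z)^\kappa$, respectively $\Z^\kappa\twoheadrightarrow(\Z/p\Z)^\kappa$, and $(\Z/p\Z)^\kappa$ is an infinite-dimensional $\mathbb F_p$-vector space. Thus $\Z_p^\kappa$ (resp. $\Z^\kappa$) collapses at $p$, and the device above — applied with $\Z/p\Z\le P$, resp. with $\Z/p\Z\le R$ for a prime $p$ with $R_{(p)}\neq 0$, which exists since $R$ is a nonzero torsion group — shows it is not $P$-small (resp. not $R$-small). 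For the converses, when $\kappa$ is finite we have $\Z_p^\kappa=\Z_p^{(\kappa)}$ and $\Z^\kappa=\Z^{(\kappa)}$. Here $\Z$ is finitely generated, hence small, hence $R$-small; and $\Z_p$ is $P$-small for every nonzero $p$-group $P$, since any $f\in\Hom(\Z_p,P^{(I)})$ maps $\Z_p$ into the finitely many coordinates that support $f(1)$ (multiplication by integers prime to $p$ being invertible on the $p$-group $P^{(I)}$), so $f(\Z_p)$ lies in a finite sub-sum by Lemma~\ref{B-small}. Lemma~\ref{sum}(1) then gives that $\Z_p^{(\kappa)}$ is $P$-small and $\Z^{(\kappa)}$ is $R$-small.

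For (3) and (4) I would argue contrapositively. Suppose $\{T\in\Tc\mid T_{(p)}\neq 0\}$ is infinite and pick a countable subfamily $\{T_n\mid n<\omega\}$ inside it. Each $T_n$ is finite, so $T_n=\bigoplus_q (T_n)_{(q)}$, and the nonzero finite $p$-group $(T_n)_{(p)}$ has $\Z/p\Z$ as a homomorphic image, whence $T_n\twoheadrightarrow\Z/p\Z$. Composing the projection $\prod\Tc\twoheadrightarrow\prod_{n<\omega}T_n$ with these maps coordinatewise gives $\prod\Tc\twoheadrightarrow(\Z/p\Z)^\omega$, so $\prod\Tc$ collapses at $p$ and is therefore not $P$-small by the device; for (4) one repeats this for each $p$ with $R_{(p)}\neq 0$, using $\Z/p\Z\le R$, to conclude that $\prod\Tc$ is not $R$-small.

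The one point I expect to require genuine attention is the non-$\Z/p\Z$-smallness of an infinite-dimensional $\mathbb F_p$-vector space; this is handled above via Corollaries~\ref{M^k} and~\ref{power} (alternatively, directly from Lemma~\ref{B-small} using the identity map on $(\Z/p\Z)^{(\omega)}$). Everything else is formal bookkeeping with Lemmas~\ref{factor} and~\ref{sum}, and since the only place a product is replaced by a direct sum is the splitting off of a vector-space summand, no limiting argument enters anywhere.
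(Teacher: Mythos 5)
Your proof is correct and follows essentially the same route as the paper: both arguments reduce every case to the fact that an infinite elementary abelian $p$-group is not $\Z/p\Z$-small, and then transfer non-smallness through quotients and embeddings via Lemma~\ref{factor}. Your ``collapses at $p$'' device is just a clean packaging of the paper's repeated coordinatewise reduction mod $p$ (the paper forms $(\prod\Tc_p)/\prod\Sc$ where you take a countable subfamily and surject onto $(\Z/p\Z)^\omega$), so there is nothing substantively different to compare.
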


\begin{proof} (1) If $\kappa$ is finite, then $\Z_p^\kappa$ is finite, and so $P$-small (it is, in fact, small). If $\kappa$ is infinite, then $\Z_p^\kappa$ is an infinitely generated vector space over $\Z_p$. Hence infinite direct sum of groups $\Z_p$, which is not $\Z_p$-small, so it is not $P$-small by Lemma~\ref{factor}(2), since there exists $Q \leq P$ with $Q \simeq \Z_p$.

(2) It is enough to prove the direct implication. Suppose that $\kappa$ is infinite. Since there exists $p\in\Pb$ such that $R_{(p)}\ne0$ and $\Z^\kappa/ (p\Z^\kappa)\cong \Z_p^\kappa$ is not $R_{(p)}$-small by (1). Then  $\Z^\kappa$ is not $R$-small by Lemma~\ref{factor}(1).

(3) Put $\Tc_p= \{T_{(p)}\mid T\in\Tc, T_{(p)}\ne 0\}$ and $\Sc=\{pS \mid S\in\Tc_p\}$ and suppose that $\kappa=|\Tc_p|=|\{T\in\Tc\mid T_{(p)}\ne 0\}|$ is infinite. Then $(\prod \Tc_p)/\prod \Sc\cong \Z_p^\kappa$ which is not $P$-small by (1), and so $\prod \Tc_p$ is not $P$-small by Lemma~\ref{factor}(1). Now $\prod \Tc$ is not $P$-small by Lemma~\ref{factor}(1) again, as $\prod \Tc_p$ is a direct summand of $\prod \Tc$.

(4) It follows from (3) and Lemma~\ref{factor}(2).
\end{proof}

\begin{lm}\label{p-groups}  
Let $A_p$ be a finite $p$-group for each $p\in \Pb$. Then $\prod_{p\in \Pb}A_p$ is self-small.
\end{lm}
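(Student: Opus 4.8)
The plan is to verify the criterion of Corollary~\ref{powerAC} in the ``product'' form of Theorem~\ref{criterion}(3), exploiting that each $A_p$ is finite. Write $M=\prod_{p\in\Pb}A_p$. By Theorem~\ref{criterion} it suffices to show that $M$ is $\bigoplus\Cc$-small for every countable subfamily $\Cc\subseteq\{A_p\mid p\in\Pb\}$; since the $A_p$ are indexed by primes, such a $\Cc$ is determined by a (countable, possibly finite) set of primes $\Pb_0\subseteq\Pb$, and $\bigoplus\Cc=\bigoplus_{p\in\Pb_0}A_p$ is a finite or countable \emph{torsion} group. If $\Pb_0$ is finite then $\bigoplus\Cc$ is a finite group, hence small, so there is nothing to prove; thus assume $\Pb_0$ is infinite.

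First I would use Proposition~\ref{notB-small}: if $M$ were not $\bigoplus\Cc$-small, there would be an $\omega$-filtration $(M_i\mid i<\omega)$ of $M$ with $\Hom(M/M_n,\bigoplus\Cc)\ne0$ for all $n<\omega$. The key structural point is that $\bigoplus\Cc=\bigoplus_{p\in\Pb_0}A_p$ decomposes by primes, so $\Hom(M/M_n,\bigoplus\Cc)\ne0$ forces, for each $n$, some prime $p_n\in\Pb_0$ with $\Hom(M/M_n,A_{p_n})\ne0$; as each $A_{p_n}$ is a finite $p_n$-group, it embeds into $\Z_{p_n}^{k_n}$ for a suitable power of $p_n$ (its exponent), hence $\Hom(M/M_n,\Z_{p_n})\ne0$, and in fact the quotient $M/M_n$ has nonzero $p_n$-torsion image. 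The plan is then to show this cannot happen for infinitely many distinct primes simultaneously, using that $M=\prod A_p$ with each factor finite: a single homomorphism $M\to\Z_p$ must kill almost all coordinates (indeed kill $\prod_{q\ne p}A_q$, since that factor is $p$-divisible, being a direct limit situation — more precisely $\prod_{q\ne p}A_q$ is a torsion group with trivial $p$-component, so any homomorphism to a $p$-group vanishes on it by Example~(2) of the first section), so $M\to\Z_p$ factors through the finite group $A_p$ and there are only finitely many such maps.

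The cleaner route, which I expect to carry out, is to realize $M$ explicitly as an extension and apply the closure results of Section~2. Note $T_M=\bigoplus_{p\in\Pb}A_p=\bigoplus\{A_p\mid p\in\Pb\}$ is the torsion part of $M$, and $M/T_M$ is torsion-free and divisible (this is the standard computation: $\prod_{p}A_p$ modulo its direct sum is a $\Q$-vector space, since for each prime $q$ and each element of $M$, the $q$-components are bounded coordinatewise and vanish on a cofinite set of primes, so the element becomes $q$-divisible modulo $T_M$). More useful for smallness: by Lemma~\ref{factor}(2) and Theorem~\ref{criterion} I only need $M$ to be $A_p$-small for each individual $p$ (since $\bigoplus\Cc$ embeds into $\prod_{p\in\Pb_0}A_p\le\prod_{p\in\Pb}A_p=M$... no, that is circular), so instead: for a fixed prime $p$, $M=A_p\times\prod_{q\ne p}A_q$; the second factor is a torsion group whose $p$-component is zero, so $\Hom\bigl(\prod_{q\ne p}A_q, A_p\bigr)=0$ and likewise $\Hom(M,A_p)\cong\Hom(A_p,A_p)=\End(A_p)$ is finite; hence for any $\omega$-filtration $(M_i)$ of $M$, the descending chain $\Hom(M/M_i,A_p)$ of subgroups of the finite group $\End(A_p)$ stabilizes, and — since $\bigcup M_i=M$ — it stabilizes at $0$, so by Proposition~\ref{notB-small}(4) applied with $B=A_p$ we conclude $M$ is $A_p$-small. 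Then $M$ is $\bigoplus_{p\in\Pb_0}A_p$-small for finite $\Pb_0$ by Lemma~\ref{sum}(2); passing to countable $\Pb_0$ requires the observation that a homomorphism $M\to\bigoplus\Cc$ with $\bigoplus\Cc$ torsion still factors coordinatewise.

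I expect the main obstacle to be exactly this last point: the leap from ``$M$ is $N$-small for each finite $N$'' to ``$M$ is $\bigoplus\Cc$-small for countable $\Cc$'', since Lemma~\ref{sum}(2) only handles \emph{finite} families and relative smallness is notoriously not closed under infinite sums. I would resolve it by going back to Proposition~\ref{notB-small}: given an $\omega$-filtration $(M_i\mid i<\omega)$ of $M$ with $\Hom(M/M_n,\bigoplus\Cc)\ne0$ for all $n$, extract primes $p_n\in\Pb_0$ with $\Hom(M/M_n,A_{p_n})\ne0$; the crux is to show the set $\{p_n\mid n<\omega\}$ must be finite, because a nonzero homomorphism $M/M_n\to A_{p_n}$ lifts to a nonzero homomorphism $M\to A_{p_n}$ vanishing on $M_n$, and any homomorphism $M\to A_p$ (for \emph{any} prime $p$) is determined by its restriction to $A_p$ and annihilates $\prod_{q\ne p}A_q$; a counting argument on the finitely generated subgroups $M_n\cap(\text{finite approximations of }M)$, or simpler, the fact that only finitely many primes $p$ can have $\Hom(M/M_n,A_p)\ne0$ for a \emph{fixed} large enough $n$ — since $M/M_n$ is a quotient of $M$ by a subgroup containing, eventually, a ``large'' finite approximation — forces a contradiction with the filtration condition holding for all $n$. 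Once the prime set is pinned down to be finite, we are back in the finite case already handled, completing the proof.
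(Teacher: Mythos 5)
Your overall strategy is genuinely different from the paper's: the paper shows that $\prod_{p}A_p/\bigoplus_p A_p$ is divisible, deduces $\Hom(\prod_{q\ne p}A_q,A_p)=0$ from that, and then invokes an external criterion (\cite[Proposition 1.6]{Z08}), whereas you try to run everything through Theorem~\ref{criterion} and Proposition~\ref{notB-small}. Several of your steps are sound: $\Hom(\prod_{q\ne p}A_q,A_p)=0$ does hold, and your $p$-divisibility justification is the right one (multiplication by $p$ is an automorphism of each $A_q$ for $q\ne p$, hence of the product, and a $p$-divisible subgroup of a finite $p$-group is trivial); note however that your alternative justification, that $\prod_{q\ne p}A_q$ is a torsion group with trivial $p$-component, is false -- a product of infinitely many finite groups of unbounded exponent is not torsion. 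From $\Hom(M,A_p)\cong\End(A_p)$ finite you correctly conclude that $M$ is $A_p$-small for each $p$, hence $\bigoplus_{p\in F}A_p$-small for every finite $F$ by Lemma~\ref{sum}(2).

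The genuine gap sits exactly where you place it: the passage to countable $\Cc$. Writing $S_n=\{p : \Hom(M/M_n,A_p)\ne 0\}$, a decreasing sequence of sets, you must show that some $S_n$ is finite; then, since a decreasing chain of nonempty finite sets has nonempty intersection, a single prime would work for all $n$, contradicting $A_p$-smallness. Your proposed justification -- that $M_n$ eventually contains a ``large finite approximation'' of $M$ -- only shows that each \emph{individual} prime eventually leaves $S_n$ (i.e.\ $\bigcap_n S_n=\emptyset$, because every finite subgroup of $M$ is eventually inside $M_n$); it does not show that $S_n$ becomes finite, and without finiteness the empty intersection yields no contradiction. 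The missing idea is a diagonal element of the \emph{product}: if there were infinitely many distinct primes $p_k$ with nonzero $\varphi_k\in\End(A_{p_k})$ satisfying $\varphi_k(\pi_{p_k}(M_{n_k}))=0$, choose $a_k\in A_{p_k}\setminus\ker\varphi_k$ and let $m\in M$ have $\pi_{p_k}(m)=a_k$ and all other coordinates zero; then $m\in M_N$ for some $N$, which forces $a_k\in\pi_{p_k}(M_{n_k})\subseteq\ker\varphi_k$ for all $k$ with $n_k\ge N$, a contradiction. This element exists only because $M$ is a product rather than a sum -- it is precisely where the hypothesis enters -- and its absence leaves your argument incomplete, even though the skeleton is correct and, once this step is supplied, your route gives a self-contained alternative to the paper's appeal to \cite{Z08}.
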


\begin{proof} Repeating the argument of \cite[Lemma 1.7]{Z08} (cf. also Example~\ref{prod Z_p}(2)) we can see that  
$A=\prod_{p\in \Pb}A_p/\bigoplus_{p\in \Pb}A_p$ is divisible, since $A_q$ is $p$-divisible for every $p\in \Pb$ except for $p=q$. Hence $A$ is $p$-divisible for all primes $p$ and so it is divisible. If $f\in \Hom(\prod_{p\ne q}A_p, A_q)$ where $q\in \Pb$, then $\bigoplus_{p\ne q}A_p\subseteq \ker f$, hence $\im f$ is isomorphic to some factor of the divisible group $A$. Therefore $\im f$ is divisible, so  $\im f=0$. In consequence, $\Hom(\prod_{p\ne q}A_p, A_q)=0$  and the fact that $A_q$ is self-small for each $q\in\Pb$ implies that $\prod_{p\in \Pb}A_p$ is self-small by applying \cite[Proposition 1.6]{Z08}.
\end{proof}


Now we are ready to describe self-small products of finitely generated groups.

\begin{thm}\label{ss-product}  Let $\M$ be a family of nonzero finitely generated abelian groups such that at least one $N\in\M$ has nonzero torsion part and put $M=\prod\M$, $S=\bigoplus\M$ and $F=S/T_S$. 
	Then the following conditions are equivalent:
	\begin{enumerate}
		\item $M$ is self-small,
		\item $S$ is $\Z$-small and $S_{(p)}$-small for all $p\in\Pb$,
		\item $S_{(p)}$ is finite for each $p\in\Pb$ and $S/T_S$ is finitely generated
		\item there are only finitely many $A\in\M$ which are infinite and for each $p\in \Pb$ there are only finitely many $A\in\M$ with $A_{(p)}\ne 0$,
		\item the family $\{B\in\M\mid \Hom(B,A)\ne 0\}$ is finite for each $A\in\M$,
		\item there are only finitely many $A\in\M$ which are infinite and the family $\{B\in\M\mid \Hom(C, B)\ne 0\}$ is finite for each finite $C\in\M$,
		\item $M\cong F\oplus\prod_{p\in\Pb} M_p$ for a finitely generated free group $F$ and finite abelian $p$-groups $M_p$ for each $p\in\Pb$.
	\end{enumerate}
\end{thm}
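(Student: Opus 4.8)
The plan is to prove the theorem by establishing a cycle of implications, exploiting the earlier criteria (Theorem~\ref{criterion}, Corollary~\ref{powerAC}) together with the torsion/torsion-free dichotomy for finitely generated groups, namely that each $A\in\M$ decomposes as $A\cong \Z^{r_A}\oplus T_A$ with $T_A$ finite. First I would show (1)$\Rightarrow$(2): by Theorem~\ref{criterion}, self-smallness of $M$ gives that $M$ is $S$-small, hence $M$ is $\bigoplus\Cc$-small for every countable $\Cc\subseteq\M$; since $S$ embeds in $M$, Lemma~\ref{factor}(1) makes $S$ itself $S$-small, and then Lemma~\ref{factor}(2) (as $\Z$ and each $S_{(p)}$ embed in powers of $S$, or rather are direct summands / subgroups of $S$) yields that $S$ is $\Z$-small and $S_{(p)}$-small for all $p$.

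Next, (2)$\Rightarrow$(3): writing $S/T_S\cong \Z^{(\lambda)}$ where $\lambda=\sum_{A\in\M} r_A$, the containment of $\Z^{(\lambda)}$ as a factor of $S$ together with $S$ being $\Z$-small forces (via Lemma~\ref{factor}(1) and the fact that an infinitely generated free group is not $\Z$-small, cf. Corollary~\ref{M^k}) that $\lambda$ is finite, i.e. $S/T_S$ is finitely generated. Similarly, $S_{(p)}=\bigoplus_{A\in\M} A_{(p)}$, and if infinitely many $A_{(p)}$ were nonzero then $S_{(p)}$ would contain an infinite direct sum of nonzero finite $p$-groups, which is not $S_{(p)}$-small (each such summand contains a copy of $\Z_p$ or contributes a nonzero endomorphism killing a finite stage of an $\omega$-filtration — this is exactly the mechanism behind Lemma~\ref{torsion-prod}); hence only finitely many $A_{(p)}$ are nonzero and $S_{(p)}$ is finite. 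The equivalence (3)$\Leftrightarrow$(4) is then essentially a translation: $S/T_S$ finitely generated says $\sum r_A<\infty$, equivalently only finitely many $A$ are infinite (each infinite finitely generated group has $r_A\ge 1$); and $S_{(p)}$ finite says only finitely many $A$ have $A_{(p)}\ne 0$.

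For the bookkeeping equivalences (4)$\Leftrightarrow$(5)$\Leftrightarrow$(6), I would argue directly on Hom-groups between finitely generated abelian groups: $\Hom(B,A)\ne 0$ for $B,A$ finitely generated happens precisely when either both have positive rank, or $B$ has a nonzero $p$-torsion part and $A$ has nonzero $p$-torsion for a common prime $p$, or $B$ is finite with a $p$-summand and $A$ has infinite order divisible structure — more simply, when they ``share'' a prime in the appropriate sense or both are infinite. Condition (4) (finitely many infinite $A$, and for each $p$ finitely many $A$ with $A_{(p)}\ne0$) then makes each set $\{B\mid \Hom(B,A)\ne0\}$ finite, since a nonzero map out of $B$ into a fixed $A$ requires $B$ to be infinite (finitely many such) or to have torsion at one of the finitely many primes dividing the order of $T_A$ or relevant to $A$ (again finitely many such $B$ by (4)). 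Conversely (5)$\Rightarrow$(4) by taking $A$ to be a fixed infinite member (so $\{B\mid \Hom(B,A)\ne0\}\supseteq\{$all infinite members$\}$) and $A=\Z_p$-type members to control $p$-torsion; (6) is the same statement with the roles of the variance reversed and is handled dually. Finally (3)$\Rightarrow$(7): when $S_{(p)}$ is finite, $M_p:=\prod_{A\in\M}A_{(p)}$ is a finite $p$-group (finite product of finite $p$-groups with only finitely many nontrivial), the torsion-free parts contribute a finitely generated free group $F\cong\Z^{\sum r_A}$ that splits off, and one checks $M\cong F\oplus\prod_{p\in\Pb}M_p$ using that $\prod\M\cong \prod_A(\Z^{r_A}\oplus T_A)\cong \Z^{\sum r_A}\oplus\prod_A T_A$ and $\prod_A T_A\cong\prod_{p}\prod_A A_{(p)}=\prod_p M_p$. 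And (7)$\Rightarrow$(1): $F$ is finitely generated hence self-small, $\prod_{p\in\Pb}M_p$ is self-small by Lemma~\ref{p-groups}, and $\Hom(F,\prod_p M_p)=0$ (torsion-free to torsion) while $\Hom(\prod_p M_p, F)=0$ (torsion to torsion-free), so by Corollary~\ref{finite_sums} (applied to the two-element family $\{F,\prod_p M_p\}$, noting $F$-smallness and $\prod_p M_p$-smallness hold in both directions trivially from the vanishing Hom's plus individual self-smallness) $M$ is self-small.

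The main obstacle I anticipate is the clean verification of the Hom-theoretic characterizations (5) and (6), specifically pinning down exactly when $\Hom(B,A)=0$ for finitely generated $B,A$ and showing the finiteness conditions match (4) without gaps — the subtlety is that $\Hom(\Z,A)\ne0$ whenever $A\ne0$, so ``$B$ infinite'' always sees every nonzero $A$, which is why (5) needs the rank control to be phrased via $\Hom(B,A)$ with $B$ ranging and $A$ fixed finite or infinite appropriately, and correspondingly (6) restricts to finite $C$; getting these asymmetric formulations to line up with (4) is where the care is needed. A secondary technical point is justifying the direct-sum decomposition in (7) commutes with the infinite product, i.e. that $\prod_{p}\prod_{A}A_{(p)}$ genuinely equals $\prod_A T_A$ as a subgroup of $M$ and splits off the free part — but this is routine since only finitely many primes are involved for each coordinate and the ranks sum to a finite number.
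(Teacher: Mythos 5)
Your overall architecture (decompose each $A\in\M$ as $F_A\oplus T_A$, prove the bookkeeping equivalences, and close the cycle through (7)) matches the paper's, but your step (1)$\Rightarrow$(2) contains a genuine error. You argue that since $M$ is $S$-small and ``$S$ embeds in $M$'', Lemma~\ref{factor}(1) makes $S$ self-small. Lemma~\ref{factor}(1) transfers relative smallness to \emph{quotients}, not to subgroups, and $S=\bigoplus\M$ is a subgroup of $M=\prod\M$ that is in general not a direct summand and not a quotient. Worse, the intermediate claim is simply false: for $\M=\{\Z_p\mid p\in\Pb\}$ the product $M=\prod_p\Z_p$ is self-small, yet $S=\bigoplus_p\Z_p$ is not self-small (take the filtration by finite partial sums and apply Corollary~\ref{char}); the paper itself records that smallness does not descend along embeddings ($\Z^{(\omega)}\hookrightarrow\Z^\omega$ with $\Z^\omega$ $\Z$-small and $\Z^{(\omega)}$ not). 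The paper avoids this trap by proving (1)$\Rightarrow$(4) instead: $\prod_AF_A$ and $\prod_AT_A$ are direct summands, hence genuine factors of $M$, so they inherit $M$-smallness by Lemma~\ref{factor}(1) and then $T$-smallness by Lemma~\ref{factor}(2) (as $T\le M$); Lemma~\ref{torsion-prod}(2) and (4) then force the finiteness conditions. Note that this is exactly where the standing hypothesis that some $N\in\M$ has nonzero torsion part is used ($T\ne 0$ is needed for Lemma~\ref{torsion-prod}(2)); your route makes no use of it, which is another sign something is off.

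Two smaller points. In (7)$\Rightarrow$(1) you assert $\Hom(F,\prod_pM_p)=0$ ``torsion-free to torsion'': this is false when $F\ne0$ (e.g.\ $\Hom(\Z,\Z_p)\ne0$), though it is also not needed, since $F$ finitely generated is small and hence $\prod_pM_p$-small. You also justify $\Hom(\prod_pM_p,F)=0$ by ``torsion to torsion-free'', but $\prod_pM_p$ is not torsion when infinitely many $M_p\ne0$; the correct argument (as in Lemma~\ref{p-groups}) is that $\prod_pM_p/\bigoplus_pM_p$ is divisible, so any map to the free group $F$ kills the torsion part and has divisible, hence zero, image. This vanishing \emph{is} needed to verify the hypothesis of Corollary~\ref{finite_sums} that $\prod_pM_p$ is $F$-small, so the justification should be repaired. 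Finally, your stated criterion for $\Hom(B,A)\ne0$ between finitely generated groups (``both have positive rank'', plus a clause about ``divisible structure'') is garbled; the correct statement, which the paper uses, is that $\Hom(B,A)\ne0$ iff $F_B\ne0$ (and $A\ne0$) or $(T_B)_{(p)}\ne0\ne(T_A)_{(p)}$ for some prime $p$. With that fixed, your treatment of (4)$\Leftrightarrow$(5)$\Leftrightarrow$(6), of (2)$\Leftrightarrow$(3)$\Leftrightarrow$(4), and of (3)$\Rightarrow$(7) is sound and essentially the paper's.
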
 

\begin{proof} It is well-known that for every finitely generated abelian group $A\in\M$ there exists a finitely generated free group $F_A$ and a finite torsion group $T_A$ such that $A\cong F_A\oplus T_A$. Put $F=\bigoplus_{A\in\M}F_A$ and $T=\bigoplus_{A\in\M}T_A$ and note that $S\cong F\oplus T$ where $F$ is a free abelian group and $T$ is the torsion part of $S$.
	Furthermore $M\cong \prod_{A\in\M}F_A \oplus \prod_{A\in\M}T_A$.
	
	(2)$\Leftrightarrow$(3)$\Leftrightarrow$(4) Note that $\Hom(T,\Z)=0$, hence $S$ is $\Z$-small if and only if $F$ is $\Z$-small. Thus $S$ is $\Z$-small if and only if $S/T_S\cong F$ is finitely generated which holds true if and only if there are only finitely many $A\in\M$ with nonzero $F_A$, i.e. which are infinite. 
	Furthermore, it is easy to see that $S$ is $S_{(p)}$-small  if and only if $S_{(p)}$ is finite  if and only if there exists only finitely many $A\in\M$ such that $A_{(p)}\ne 0$.
	
	(4)$\Rightarrow$(5) Take $A\in\M$. Then $\Hom(B,A)\ne 0$ if and only if $F_B\ne 0$ or there exists $p\in\Pb$ satisfying $(T_A)_{(p)}\ne 0\ne (T_B)_{(p)}$. Since $A$ is finitely generated, there exist $p_i\in\Pb$, $i=1,\dots,k$ such that $T_A=\bigoplus_{i=1}^k(T_A)_{(p_i)}$. In total, we get
	\[
	\{B\in\M\mid \Hom(B,A)\ne 0\}\subseteq \{B\mid F_B\ne 0\}\cup \bigcup_{i=1}^k\{B\mid (T_B)_{(p_i)}\ne 0\},
	\]
	where both sets on the right-hand side are finite.
	
	(5)$\Rightarrow$(4) Let $A\in\M$ be infinite, i.e  $F_A\ne 0$. If $B$ is infinite, then $\Hom(B,A)\ne 0$, hence there exist only finitely many infinite groups $B\in\M$. Similarly, if $A,B\in\M$ are such that $A_{(p)}\ne 0\ne B_{(p)}$, then $\Hom(B,A)\ne 0$, so for each $p\in\Pb$ there are only finitely many $B\in\M$ such that $B_{(p)}\ne 0$.

	(1)$\Rightarrow$(4) Since $M$ is self-small, it is $S$-small by Theorem~\ref{criterion}. Furthermore, $\prod_{A\in\M}F_A$ being a direct summand, hence a factor of $M$, it is $M$-small and in consequence $T$-small by Lemma~\ref{factor}(2), so $\prod_{A\in\M}F_A$ is finitely generated by Lemma~\ref{torsion-prod}(2). Therefore there exist only finitely many $A$ with $F_A\ne 0$. 
	Similarly, since $\prod_{A\in\M}T_A$ is $T$-small,
	there exist only finitely many $A\in\M$ such that $A_{(p)}=(T_A)_{(p)}\ne 0$ for each $p\in \Pb$ by Lemma~\ref{torsion-prod}(4).

	(3)$\Rightarrow$(7) Note that by (3) $F=\bigoplus_{A\in\M}F_A$ is finitely generated. Moreover,
	\[
	\prod_{A\in\M}T_A = \prod_{A\in\M}\bigoplus_{p\in\Pb}(T_A)_{(p)}\cong
	\prod_{A\in\M}\prod_{p\in\Pb}(T_A)_{(p)}\cong \prod_{p\in\Pb}\prod_{A\in\M}(T_A)_{(p)}\cong \prod_{p\in\Pb}\bigoplus_{A\in\M}(T_A)_{(p)},
	\] 
	because $T_A$ is finite for all $A\in\M$ and for each $p\in\Pb$ there exist only finitely many $A$ with $(T_A)_{(p)}\ne0$. Then $M_p=\bigoplus_{A\in\M}(T_A)_{(p)}$ is a finite $p$-group for all $p\in\Pb$ and
	$M\cong F\oplus \prod_{A\in\M}T_A\cong F\oplus \prod_{p\in\Pb} M_p$
	
	(7)$\Rightarrow$(1) By Theorem~\ref{criterion} it is enough to prove that $M$ is $F\oplus\bigoplus_{p\in\Pb} M_p$-small. Since $F$ is finitely generated, it is $F\oplus\bigoplus_{p\in\Pb} M_p$-small. As $\Hom(\prod_{p\in\Pb} M_p, F)=0$, it remains to show that $\prod_{p\in\Pb} M_p$ is $\bigoplus_{p\in\Pb} M_p$-small by Proposition~\ref{sum-small}, which holds true by Lemma~\ref{p-groups} and Theorem~\ref{criterion}.

	(5)$\Leftrightarrow$(6) The assertion concerning infinite groups follows from the equivalence of (4) and (5). The rest is a consequence of the fact that $\Hom(C,B)\ne 0$ if and only if $\Hom(B,C)\ne 0$ for each pair of finitely generated torsion abelian groups $B, C$.
\end{proof}

An uncountable cardinal $\kappa$ is \emph{measurable} if it admits a
$\kappa$-additive measure $\mu: \kappa \to \{0;1\}$ such that $\mu(\kappa) = 1$ and
$\mu(x) = 0$ for $x\in\kappa$.
A group $G$ is called \emph{slender}, if for any homomorphism
$f:\mathbb{Z}^{\omega} \to G$, $f(\mathbf{e}_i) = 0$ for almost all $i\in\omega$,
where $\mathbf{e}_i$ denotes the element of $Z^{\omega}$ with
$\pi_j(\mathbf{e}_i) = \delta_{i,j}$. 
Recall that $\mathbb{Z}$ is slender by \cite[Theorem 94.2]{FuchsII} and that  for a nonmeasurable cardinal
$\kappa$ we have $\Hom(Z^{\kappa}, Z) \cong
Z^{(\kappa)}$ by \cite[Corollary 94.5]{FuchsII} (cf. also \cite[Theorem 3.6]{ABS}).

\begin{lm}\label{Z^kappa} $\Z^\kappa$ is $\Z$-small for each cardinal $\kappa$.
\end{lm}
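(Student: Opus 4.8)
The statement $\Z^\kappa$ is $\Z$-small for every cardinal $\kappa$ is a statement about a product of $\Z$'s, and the natural strategy is a reduction to the slender case. By Lemma~\ref{B-small}, I must show that for every set $I$ and every $f\in\Hom(\Z^\kappa,\Z^{(I)})$ there is a finite $F\subseteq I$ with $f(\Z^\kappa)\subseteq\Z^{(F)}$. Equivalently, by Proposition~\ref{notB-small}, it suffices to rule out the existence of an $\omega$-filtration $(A_n\mid n<\omega)$ of $\Z^\kappa$ together with nonzero homomorphisms $f_n\in\Hom(\Z^\kappa,\Z)$ with $f_n(A_n)=0$; but in fact it is cleaner to argue directly via condition (2) of that proposition, i.e. to show that the image of any $f\colon\Z^\kappa\to\Z^{(\omega)}$ lands in some $\Z^{(n)}$.

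\medskip
\textbf{Main steps.} First, I would split off the measurable-cardinal issue. If $\kappa$ is nonmeasurable, then by the quoted fact $\Hom(\Z^\kappa,\Z)\cong\Z^{(\kappa)}$, and composing $f\colon\Z^\kappa\to\Z^{(I)}$ with each coordinate projection $\pi_i$ gives $\pi_i f\in\Hom(\Z^\kappa,\Z)$; the point is that the whole map $f$ is then built from at most countably many "standard" coordinate functionals, and slenderness of $\Z$ forces each $\pi_i f$ to vanish on almost all of a fixed copy of $\Z^{(\omega)}$ inside $\Z^\kappa$. The standard argument here (as in the proof that $\Z^\omega$ is slender-detecting) shows: for any $f\colon\Z^\omega\to\Z^{(I)}$, only finitely many coordinates $i\in I$ have $\pi_i f\ne 0$, because otherwise one extracts, via a back-and-forth / Specker-style argument using the $e_j$ and suitable infinite "branches" $\sum_j a_j e_j$, a contradiction with slenderness of $\Z$ in the target coordinate. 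This gives $f(\Z^\omega)\subseteq\Z^{(F)}$ for finite $F$, hence $\Z^\omega$ — and then $\Z^\kappa$ for $\kappa$ nonmeasurable — is $\Z$-small. For the general cardinal $\kappa$, I would handle an arbitrary $f\colon\Z^\kappa\to\Z^{(\omega)}$ by restricting to countable subproducts: write $\kappa=\bigcup_{\alpha}S_\alpha$ as an increasing union and note $f$ restricted to each $\Z^{S}$ with $|S|=\omega$ already has image in some $\Z^{(n_S)}$ by the nonmeasurable (indeed countable) case; the delicate part is to bound these $n_S$ uniformly. Here I would use that $\Z^{(\omega)}$ has no uncountable "ascending" family of the required type, together with the fact that the kernels of the $\pi_n f$ are subgroups of $\Z^\kappa$ and a direct limit / cofinality argument on the $S_\alpha$ pins the image into a single $\Z^{(n)}$. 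Alternatively — and more in the spirit of the paper — one invokes that $\Z^\kappa/\Z^{(\kappa)}$ and the slender target interact so that any $f$ factors through a countably generated pure quotient on which the nonmeasurable analysis applies.

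\medskip
\textbf{The main obstacle.} The genuinely hard point is the passage from nonmeasurable $\kappa$ to \emph{arbitrary} $\kappa$, since for measurable $\kappa$ one loses the clean description $\Hom(\Z^\kappa,\Z)\cong\Z^{(\kappa)}$ and there exist "$\kappa$-adically continuous" functionals coming from the measure. The resolution I anticipate is that such exotic functionals still each individually send $\Z^{(\kappa)}$ (and in particular any $\Z^{(\omega)}$) nicely, and that the target being a \emph{direct sum} $\Z^{(\omega)}$ — not a product — forces cofinite vanishing of the coordinates regardless: given $f\colon\Z^\kappa\to\Z^{(\omega)}$, the restriction $f\!\restriction_{\Z^{(\kappa)}}$ has image in some $\Z^{(m)}$ because $\Z$ is slender and $\Z^{(\kappa)}$ is free, and then one shows $f(\Z^\kappa)\subseteq\Z^{(m)}$ too by a density/continuity argument (every element of $\Z^\kappa$ is a suitable limit of elements of $\Z^{(\kappa)}$ in the product topology, on which $f$ composed with each $\pi_n$ is continuous since $\Z$ carries the discrete topology and $f$ is a group homomorphism into a discrete target restricted coordinatewise). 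Making this "continuity" rigorous — i.e. that $\pi_n f\colon\Z^\kappa\to\Z$ is continuous for the product topology on the source and discrete topology on the target — is the crux; it follows from the standard fact that a homomorphism from $\Z^\kappa$ to a slender group is continuous in this sense, which is essentially equivalent to slenderness itself. With that in hand, $f(\Z^\kappa)\subseteq\overline{f(\Z^{(\kappa)})}=\Z^{(m)}$, and Lemma~\ref{B-small} concludes that $\Z^\kappa$ is $\Z$-small.
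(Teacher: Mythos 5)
Your overall strategy (reduce to slenderness of $\Z$ and the Specker-type computation of $\Hom(\Z^\kappa,\Z)$) is workable for \emph{nonmeasurable} $\kappa$, but the step you yourself identify as the crux --- the passage to arbitrary, in particular measurable, $\kappa$ --- is exactly where the argument breaks, and your proposed fix does not repair it. The ``standard fact'' you invoke, that every homomorphism $\Z^\kappa\to\Z$ is continuous for the product topology (equivalently, is determined by finitely many coordinates, so that $f(\Z^\kappa)$ lies in the closure of $f(\Z^{(\kappa)})$), is precisely the statement that \emph{fails} when $\kappa$ is measurable: a two-valued $\kappa$-additive measure on $\kappa$ yields a nonzero homomorphism $\Z^\kappa\to\Z$ (send $x$ to the unique $n$ with $\mu(\{\alpha : x_\alpha=n\})=1$) which vanishes on the dense subgroup $\Z^{(\kappa)}$ and is therefore discontinuous; Fuchs's Corollary 94.5 is stated only for nonmeasurable exponents for this reason. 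The auxiliary claim that the restriction of $f$ to $\Z^{(\kappa)}$ has image in some $\Z^{(m)}$ ``because $\Z$ is slender and $\Z^{(\kappa)}$ is free'' is also a non sequitur: an infinitely generated free group surjects onto all of $\Z^{(\omega)}$, so freeness of the source gives no bound --- any bound must come from the extendability of $f$ to the full product, which is what is being proved. Your fallback via an increasing union of countable subproducts $\Z^{S_\alpha}$ does not close either, because $\Z^\kappa$ is not the union (or direct limit) of these subgroups: an element of full support lies in none of them.

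The missing idea, and the way the paper gets around measurability entirely, is a short cardinality argument. If $\Z^\kappa$ were not $\Z$-small, some $g\in\Hom(\Z^\kappa,\Z^{(\omega)})$ would have infinitely generated image; that image is free, hence splits off, so $\Z^\kappa\cong\Z^{(\omega)}\oplus A$. For $\kappa=\omega$ this gives $\Z^{(\omega)}\cong\Hom(\Z^\omega,\Z)\supseteq\Hom(\Z^{(\omega)},\Z)\cong\Z^\omega$ as a direct summand, impossible since $|\Z^{(\omega)}|<|\Z^\omega|$. For general $\kappa$ one chooses a \emph{nonmeasurable} $\lambda\ge\kappa$ (e.g.\ any singular cardinal above $\kappa$), uses $\Z^\lambda\cong\Z^\lambda\oplus\Z^\kappa\cong\Z^{(\omega)}\oplus B$, and applies $\Hom(-,\Z)\cong\Z^{(\lambda)}$ to embed $\Z^\omega$ into a free group, contradicting the case $\kappa=\omega$. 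This trick of enlarging the exponent to a nonmeasurable one is what your proposal lacks; without it, or some substitute for it, the measurable case remains open.
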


\begin{proof} For finite $\kappa$ there is nothing to prove, so let us suppose that $\kappa$ is infinite
and assume that $\Z^\kappa$ is not $\Z$-small. Then there exists a homomorphism $g\in\Hom(Z^{\kappa}, Z^{(\omega)})$ such that $\im \, g$ is infinitely generated by Proposition~\ref{notB-small}, hence 
$\im \, g$ is a free abelian group of infinite rank. Since $\im \, g\cong \Z^{(\omega)}$ is projective, $\mathbb{Z}^{(\omega)}$ is a direct summand of $\Z^\kappa$, i.e. there exists a group $A$ for which $\Z^\kappa\cong \Z^{(\omega)}\oplus A$.

First, assume that $\kappa=\omega$. Then $\Hom(\Z^{\omega},\Z)\cong \Z^{(\omega)}$ by \cite[Corollary 94.5]{FuchsII} as $\Z$ is slender by \cite[Theorem 94.2]{FuchsII}. Hence
\[
\Z^{(\omega)}\cong\Hom(\Z^{\omega},\Z)\cong\Hom(\Z^{(\omega)}\oplus A,\Z)\cong
\Hom(\Z^{(\omega)},\Z)\oplus\Hom(A,\Z)\cong \Z^{\omega}\oplus\Hom(A,\Z)
\]
which is impossible for cardinality reasons (i.e. $|\Z^{(\omega)}|<|\Z^{\omega}|$).

We have proved that $\Z^{\omega}$ is $\Z$-small, so $\kappa>\omega$. Let $\lambda\ge\kappa$ be a nonmeasurable cardinal (it exists, as for instance each singular cardinal is nonmesurable). Then $\Hom(Z^{\lambda}, \Z) \cong
Z^{(\lambda)}$ by \cite[Corollary 94.5]{FuchsII} and $\Z^\lambda\cong \Z^\lambda\oplus\Z^\kappa$ as $\lambda+\kappa=\lambda$, hence $\Z^\lambda\cong \Z^{(\omega)}\oplus B$ for $B=\Z^\lambda\oplus A$. We get
\[
\Z^{(\lambda)}\cong\Hom(\Z^{\lambda},\Z)\cong\Hom(\Z^{(\omega)}\oplus B,\Z)\cong \Z^{\omega}\oplus\Hom(A,\Z),
\]
which implies that $\Z^{\omega}$ is embeddable into $\Z^{(\lambda)}$, so it is an infinitely generated free group. This contradicts the fact that $\Z^{\omega}$ is $\Z$-small.
\end{proof}


\begin{exm} \rm
	Expressing Proposition~\ref{sub-small}(1) via the language of short exact sequences, we can say that relative smallness is transferred from the outer members to the middle one. The other direction, however, is more complicated: while Lemma~\ref{factor}(1) implies the transfer from the middle member to the right, the previous example shows that the transfer to the left does not occur generally: we have $\Z^{(\omega)} \hookrightarrow \Z^{\omega}$, but $\Z^{(\omega)}$ is not $\Z$-small.
\end{exm}

Using Corollary~\ref{powerAC} we can formulate an important consequence:

\begin{cor}\label{ss-Z^kappa} $\Z^\kappa$ is self-small for each cardinal $\kappa$.
\end{cor}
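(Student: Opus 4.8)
The plan is to combine Lemma~\ref{Z^kappa} with Corollary~\ref{powerAC} in the obvious way. By Corollary~\ref{powerAC}, the group $\Z^\kappa$ is self-small if and only if it is $\Z$-small, and this is exactly the content of Lemma~\ref{Z^kappa}. So the whole argument is a one-line deduction.

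More concretely: first I would invoke Corollary~\ref{powerAC} with $A = \Z$ and $I$ a set of cardinality $\kappa$, which gives the equivalence ``$\Z^\kappa$ is self-small $\iff$ $\Z^\kappa$ is $\Z$-small''. Then I would cite Lemma~\ref{Z^kappa} to conclude that the right-hand side holds for every cardinal $\kappa$. There are no cases to distinguish, no filtrations to build, and no cardinal arithmetic to redo — all of that work was already carried out inside the proof of Lemma~\ref{Z^kappa} (the split-off of a free summand $\Z^{(\omega)}$, the use of slenderness of $\Z$ and the computation $\Hom(\Z^\lambda,\Z)\cong\Z^{(\lambda)}$ for nonmeasurable $\lambda$, and the cardinality contradiction $|\Z^{(\omega)}|<|\Z^\omega|$).

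\begin{proof}
By Corollary~\ref{powerAC}, $\Z^\kappa$ is self-small if and only if it is $\Z$-small, and the latter holds for every cardinal $\kappa$ by Lemma~\ref{Z^kappa}.
\end{proof}

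The main (indeed only) obstacle here lies entirely upstream, in Lemma~\ref{Z^kappa}: the substantive point is that $\Z^\kappa$ has no free direct summand of countably infinite rank, which one proves by splitting off such a summand, applying $\Hom(-,\Z)$, and deriving a cardinality contradiction after passing (if $\kappa>\omega$) to a nonmeasurable cardinal $\lambda\ge\kappa$ so that the Baer--Specker type formula $\Hom(\Z^\lambda,\Z)\cong\Z^{(\lambda)}$ applies. Once that lemma is in hand, as it is by assumption, the corollary itself requires no further ideas.
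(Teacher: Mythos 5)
Your proposal is correct and is exactly the paper's argument: the corollary is stated immediately after Lemma~\ref{Z^kappa} with the remark ``Using Corollary~\ref{powerAC} we can formulate an important consequence,'' i.e.\ self-smallness of $\Z^\kappa$ is reduced to $\Z$-smallness via Corollary~\ref{powerAC} and then settled by Lemma~\ref{Z^kappa}. Nothing further is needed.
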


We finish the paper by a general criterion of self-small products of finitely generated groups.

\begin{thm}\label{ss-product0}  Let $\M$ be a family of nonzero finitely generated abelian groups and put $M=\prod\M$, $S=\bigoplus\M$ and $F=S/T_S$. 
Then the following conditions are equivalent:
\begin{enumerate}
\item $M$ is self-small,
\item either $T_S=0$, or $S_{(p)}$ is finite for each $p\in\Pb$ and $S/T_S$ is finitely generated
\item either all $A\in\M$ are free, or the family $\{B\in\M\mid \Hom(B,A)\ne 0\}$ is finite for each $A\in\M$,
\item either $M\cong \Z^\kappa$ for some cardinal $\kappa$, or $M\cong F\oplus\prod_{p\in\Pb} M_p$ for a finitely generated free group $F$ and finite abelian $p$-groups $M_p$ for each $p\in\Pb$.
\end{enumerate}
\end{thm}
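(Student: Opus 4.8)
The plan is to reduce the general statement to the two cases already handled: the purely free case via Corollary~\ref{ss-Z^kappa}, and the mixed case via Theorem~\ref{ss-product}. First I would dispose of the trivial split according to whether the torsion part $T_S$ vanishes. If $T_S=0$, then every $A\in\M$ is finitely generated torsion-free, hence free of finite rank; writing $\M$ as a family of free groups $\Z^{n_A}$, the product $M=\prod_{A\in\M}\Z^{n_A}\cong\Z^\kappa$ where $\kappa=\sum_{A\in\M}n_A$, and this is self-small by Corollary~\ref{ss-Z^kappa}. This gives (1) in the first alternative of (2), (3), (4) simultaneously, and conversely $M\cong\Z^\kappa$ forces $T_S=0$ and every $A$ free.

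If on the other hand $T_S\ne 0$, then at least one $N\in\M$ has nonzero torsion part, so the hypotheses of Theorem~\ref{ss-product} are met, and the equivalence of its conditions (1), (3), (7) --- which are exactly the second alternatives of the present (1), (2), (4) --- gives the whole chain immediately. The only genuinely new item is condition (3) of the present theorem, namely that either all $A\in\M$ are free or $\{B\in\M\mid\Hom(B,A)\ne 0\}$ is finite for each $A$: I would observe that when $T_S\ne 0$ the first alternative is impossible (some $A$ is not free), so (3) collapses to its second alternative, which is precisely condition (5) of Theorem~\ref{ss-product}, again equivalent to self-smallness. When $T_S=0$ all groups are free and the first alternative of (3) holds vacuously, matching the first alternative of (4). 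Thus the two dichotomies line up on the nose.

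The step requiring a little care is checking that the two branches of each numbered condition are genuinely mutually exclusive and jointly cover all cases, so that no group $\M$ slips through a crack: specifically, that $T_S=0$ is equivalent to ``all $A\in\M$ are free'' is equivalent to ``$M\cong\Z^\kappa$'', and that the negation of this (some $A\in\M$ non-free, equivalently some $A$ with nonzero torsion) is exactly the regime of Theorem~\ref{ss-product}. Since finitely generated torsion-free abelian groups are free, this bookkeeping is routine. I do not anticipate a real obstacle; the proof is essentially a case split plus citation of Corollary~\ref{ss-Z^kappa} and Theorem~\ref{ss-product}, and the main thing to get right is simply recording the correspondence between the alternatives here and the conditions of the earlier theorem.
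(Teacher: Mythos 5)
Your proposal is correct and follows essentially the same route as the paper: a case split on whether $T_S=0$, with the free case settled by Corollary~\ref{ss-Z^kappa} (using that finitely generated torsion-free groups are free, so $M\cong\Z^\kappa$) and the mixed case reduced to the equivalences of Theorem~\ref{ss-product}. Your extra bookkeeping matching condition (3) here to condition (5) there, and checking the alternatives line up, is exactly what the paper's terser proof leaves implicit.
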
 

\begin{proof} The torsion part of $M$ is zero if and only all groups $A\in\M$ are free which means that $M\cong \Z^\kappa$ for some cardinal $\kappa$ is self-small by Corollary~\ref{ss-Z^kappa}. The case when the torsion part of $M$ is nonzero  follows directly  from Theorem~\ref{ss-product}.
\end{proof}

\begin{exm} \rm
	The finiteness condition in the previous theorem cannot be omitted without additional conditions, since e.g. the group $\Q \times \prod_{p \in \Pb}\Z_p$ is not self-small by \cite[Example 3]{Dv15}.
\end{exm}

\end{document}